\documentclass[11pt]{amsart}
\usepackage{amsmath,amssymb}
\usepackage[curve,matrix,arrow]{xy}

\newtheorem{thm}{Theorem}[section]

\newtheorem{lemma}[thm]{Lemma}
\newtheorem{prop}[thm]{Proposition}


\theoremstyle{definition}
\newtheorem{defn}[thm]{Definition}
\newtheorem{remark}[thm]{Remark}
\newtheorem{example}[thm]{Example}

\setlength{\textwidth}{16cm}
\setlength{\textheight}{23cm}
\addtolength{\hoffset}{-2cm}
\addtolength{\voffset}{-1cm}

\newcommand\wh[1]{\widehat{#1}}
\newcommand{\ovl}[1]{\overline{#1}}
\newcommand{\ls}[2]{{^{#1}\!{#2}}}

\newcommand{\qbox}[1]{\quad\hbox{#1}\quad}
\def\dst{\displaystyle}
\def\noo{\trianglelefteq_o}


\def\onto{\twoheadrightarrow}

\def\Inf{\operatorname{Inf}\nolimits}
\def\Hom{\operatorname{Hom}\nolimits}

\def\spec{\operatorname{Spec}\nolimits}

\newcommand\prlim[1]{{\underset{{#1}}{\varprojlim}}}

\def\Ob{\operatorname{\bf Ob}}
\def\set{\operatorname{\bf Set}}
\def\xset{\ls{\times}{\operatorname{\bf Set}}}
\def\Mack{\operatorname{\bf Mack}}

\def\AF{\mathcal AF}
\def\xAF{\ls{\times}{\!\mathcal A}F}

\def\xB{\ls{\times}B}

\def\Ab{\mathcal Ab}
\def\fix{\operatorname{Fix}}
\def\xfix{\ls{\times}{\operatorname{Fix}}}

\def\FP{\operatorname{FP}}
\def\FQ{\operatorname{FQ}}


\def\Z{\mathbb Z}
\def\N{\mathbb N}
\def\Q{\mathbb Q}

\def\CO{\mathcal O}

\def\G{\mathbf G}
\def\cl{\operatorname{Cl}}

\begin{document}

\title{On the (crossed) Burnside ring of profinite groups}
\author{Nadia Mazza}
\address
{Department of Mathematics and Statistics\\ Lancaster University\\ Lancaster \\LA1 4YF, UK} 
\email{n.mazza@lancaster.ac.uk}
\keywords{(crossed) Burnside ring, profinite groups, FC-groups, Mackey functors}

\subjclass[2020]{Primary: 19A22, 20E18, Secondary: 20F24}

\begin{abstract} In this paper we investigate some properties of the Burnside ring of a profinite group as defined in \cite{ds}.  
We introduce the notion of the crossed Burnside ring of a profinite FC-group, and generalise some results from finite to profinite (FC-)groups.
In our investigations, we also obtain results on profinite FC-groups which may be of independent interest.
\end{abstract}

\maketitle

\section{Introduction}\label{sec:intro}

The Burnside ring $B(G)$ of a group $G$ is a commutative ring, which encodes in some way useful information about the abstract group $G$ (e.g. \cite{dress-sol}). 
In particular, if $G$ is a finite group, the mapping of a subgroup $H$ of $G$ to the underlying abelian group of the Burnside ring $B(H)$ defines a projective Mackey functor for $G$, and this fact is key in the study of Mackey functors for finite groups \cite{tw}. 
The crossed $G$-sets of a finite group $G$ act on the category of Mackey functors for $G$, leading to a decomposition of the Mackey algebra of $G$ into $p$-blocks, after extending the scalars to some suitable $p$-local ring for a given prime $p$ dividing the order of $G$ \cite{b-mackey}.

In \cite{dress-sol}, Dress proves that, for $G$ finite, there exists a 1-1 correspondence between the connected components of the prime ideal spectrum of $B(G)$ and the conjugacy classes of perfect subgroups of $G$. 
In \cite{gluck}, Gluck gives a formula to calculate the primitive idempotents of $B(G)$, and he uses his result to provide an algebraic proof of Brown's result on the Euler characteristic of the simplicial complex whose vertices are the nontrivial $p$-subgroups of $G$.

In \cite{ds}, the authors introduce the Burnside ring $\wh B(G)$ of a profinite group $G$ as a generalisation of the Burnside ring of a finite group. 
In \cite[Section 5]{ds}, the authors hint at certain properties of $\wh B(G)$, which are similar to those of Mackey functors according to Dress \cite[Section 2]{tw}. 
Their results have been used in \cite{bb} to study Mackey functors arising in number theory.

The main objective of the present paper is to investigate a generalisation to profinite groups of the above results on the (crossed) Burnside ring of a finite groups and its applications.
Hence, in Sections~\ref{sec:recap} and~\ref{sec:profinite}, we review the known background on crossed Burnside rings for finite groups and on the Burnside ring of a profinite group. 
These lead us to take a closer look at profinite FC-groups in Section~\ref{sec:fc-gp}. 
An {\em FC-group} is a group in which every element has a finite conjugacy class. That is, a group $G$ is FC if and only if $C_G(g)$ has finite index in $G$, for all $g\in G$. 
Properties of FC-groups are described in \cite{tomk}, and also in \cite[Section 14.5]{Rob}. In particular, finite groups, and profinite abelian groups are profinite FC. 
We make a few observations about the structure of profinite FC-groups, which we have not found in the literature, and which may be of independent interest.
In Section~\ref{sec:xb}, we use Dress and Siebeneicher's construction of the Burnside ring of a profinite group and define the crossed Burnside ring \cite{OY1} of profinite FC-groups.
In Section~\ref{sec:spec}, we generalise in some way Dress and Gluck's results on the idempotents of the Burnside $\Q$-algebra and Burnside ring of a finite group to the class of profinite groups.
Finally, in Section~\ref{sec:xb-mackey}, we turn to Mackey functors, and review the approaches in \cite{bb,ds}, before generalising Oda and Yoshida's results \cite{b-mackey, OY1} to obtain an action of almost finite crossed $G$-spaces on the category of Mackey functors.


\section{Background on the crossed Burnside ring of finite groups}\label{sec:recap}

We recall the needed background on crossed Burnside rings of finite groups from \cite[Section 2]{OY1}.
Let $G$ be a finite group and let $\set_G$ denote the category whose objects are the finite $G$-sets and the morphisms are $G$-equivariant maps. 
Let $S$ be a normal subgroup of $G$, which we regard as a $G$-set for the conjugation action: $(g,s)\mapsto\ls gs=gsg^{-1}$ for all $g\in G$ and all $S\in S$. 
Then $S$ is a $G$-monoid. That is, $S$ is a $G$-set equipped with a multiplication $S\times S\to S$ for which there is a multiplicative identity $1_S$. 

\begin{defn}\cite[(2.6,2.7)]{OY1}
Let $S$ be a normal subgroup of $G$. A {\em crossed $G$-set over $S$} is a morphism $f:X\to S$ in $\set_G$.

Given two crossed $G$-sets over $S$, say $f_i:X_i\to S$, for $i=1,2$, their sum and product are the crossed $G$-sets:
$$f_1+f_2~:~X_1\sqcup X_2~\longrightarrow S\qbox{and}f_1\times f_2~:~X_1\times X_2~\longrightarrow S,$$
where
$$(f_1+f_2)(x)=f_i(x)\qbox{for $x\in X_i$, for $i=1,2$, and}(f_1\times f_2)(x_1,x_2)=f_1(x_1)f_2(x_2).$$
The additive identity element is the unique morphism $\emptyset\to S$, where $\emptyset$ is the empty set (i.e. the initial object in $\set_G$), and the multiplicative identity is $u:G/G\to S$, where $u(G)=1_S$.
Addition and multiplication are commutative up to isomorphism. In particular,
$$\begin{array}{rcl}
\big(f_1\times f_2~:~X_1\times X_2~\longrightarrow S\big)&\longrightarrow&\big(f_2\times f_1~:~X_2\times X_1~\longrightarrow S\big)\\
(f_1\times f_2)(x_1,x_2)&\longmapsto&(f_2\times f_1)(f_1(x_1)x_2,x_1),\end{array}$$ 
is an isomorphism of crossed $G$-sets since 
$$f_2\big(f_1(x_1)x_2\big)f_1(x_1)=\ls{f_1(x_1)}f_2(x_2)f_1(x_1)=f_1(x_1)f_2(x_2)\qbox{in $S$.}$$

Define the category $\xset_{(G,S)}$ to be the category whose objects are the crossed $G$-sets over $S$, and the morphisms 
$$\phi:(f_1:X_1\to S)\longrightarrow(f_2:X_2\to S)\qbox{in $\xset_{(G,S)}$}$$
are the $G$-equivariant maps $\phi:X_1\to X_2$ such that $f_2\phi=f_1:X_1\to S$.
\end{defn}

The category of crossed $G$-sets over $S$ is a commutative monoid. 
The Grothendieck construction \cite[Section 24.1]{may} turns the abelian monoid of isomorphism classes of crossed $G$-sets over $S$ into a commutative ring $\xB(G,S)$, called the {\em crossed Burnside ring} of $G$ over $S$ .
That is, the elements of $\xB(G,S)$ are the isomorphism classes of virtual crossed $G$-sets over $S$, which can be written as differences 
$$[f_1:X_1\to S]-[f_2:X_2\to S],$$
where $[f_i:X_i\to S]$ are isomorphism classes of crossed $G$-sets over $S$.

Unless otherwise stated, we will henceforth take $S=G$ as $G$-monoid with conjugation action of $G$, and we denote it $G^c$ to avoid any confusion. 
Thus, $G^c=\dst\bigsqcup_{x\in\cl(G)}G/C_G(x)$, where $\cl(G)$ is a set of representatives of the conjugacy classes of the elements of $G$. 
Then, we let $B^c(G)$ denote the crossed Burnside ring of $G$ over $G^c$ and simply call it the {\em crossed Burnside ring of $G$}.
 
As a group, $B^c(G)$ is free abelian with basis the isomorphism classes of transitive crossed $G$-sets. These have the form $[w_a:G/H\to G^c]$, where $w_a(gH)=\ls ga$ for some $a\in C_G(H)$.
We have $(w_a:G/H\to G^c)\cong(w_b:G/K\to G^c)$ as crossed $G$-sets if and only if $K=\ls gH$ and $b=\ls ga$ for some $g\in G$.
The Burnside ring $B(G)$ of $G$ embeds into $B^c(G)$ via the injective ring homomorphism: 
$[G/H]\mapsto[w_1:G/H\to G^c]$. We refer to \cite{b-mackey,OY1} for further properties of $B^c(G)$ for a finite group.


\section{From finite to profinite}\label{sec:profinite}

Let $G$ be a profinite group.
By a {\em subgroup} of $G$, we mean a {\em closed subgroup} of $G$.
If $U$ is an open (normal) subgroup of $G$, we write $U\leq_oG$ ($U\trianglelefteq_oG$).
We refer the reader to \cite{RZ,wilson} for the background on profinite groups. 

\vspace{.3cm}
We recall the definition of the Burnside ring of a profinite group and the basic concepts introduced in \cite[Section 2]{ds}, referring the interested reader to that article for the details. 

A {\em $G$-space} is a Hausdorff topological space $X$ equipped with a continuous $G$-equivariant action $\rho:G\times X\to X$. 
For $x\in X$, its stabiliser is the closed subgroup $G_x=\{g\in G\mid gx=x\}$ of $G$ and its orbit is the closed compact subset $Gx=\{gx\mid g\in G\}$ of $X$. 
Throughout, we denote $G\backslash X$ the set of $G$-orbits of $X$, and $[G\backslash X]$ a set of representatives.

We call $X$ {\em essentially finite} if the fixed point sets $|X^U|$ are finite for all the open subgroups $U$ of $G$.
A $G$-space $X$ is {\em almost finite} if $X$ is an essentially finite discrete topological space.

Given two essentially finite $G$-spaces $X,Y$, we define an equivalence relation
$$X\sim Y\qbox{if and only if}|X^U|=|Y^U|,\;\forall\;U\leq_oG,$$
where $X^U=\{x\in X\mid ux=x,\;\forall\; u\in U\}$. It follows that two almost finite $G$-spaces are equivalent if and only if they are isomorphic.
If $X$ is an essentially finite $G$-space, then the equivalence class $[X]$ of $X$ contains an almost finite $G$-space which is unique up to isomorphism.
In other words, considering equivalence classes of essentially finite $G$-spaces is the same as considering isomorphism classes of almost finite $G$-spaces.
Observe that if $X$ is an essentially finite $G$-space, then for all $U\leq_oG$, the set of $U$-fixed points $X^U$ is a finite $N_G(U)/U$-set.

Suppose that $X$ is a discrete $G$-space and write $X=\sqcup_{x\in[G\backslash X]}Gx$ as the disjoint union of its $G$-orbits. 
Since $G$ is compact, every orbit is a compact discrete $G$-space, i.e. finite. 
It follows that the bijection from the coset space $G/G_x$ to $Gx$, defined by $gG_x\mapsto gx$, is a homeomorphism.
Hence, a discrete $G$-space $X$ is almost finite if 
\begin{equation}\label{eq:basic}
X\cong\bigsqcup_{x\in[G\backslash X]}G/G_x,\qbox{where}G_x\leq_oG,\;\hbox{and}
\end{equation}
for all $U\leq_oG$, there exist finitely many orbits $Gx$ with $U$ contained in a $G$-conjugate of $G_x$.

\begin{defn}\label{def:afg}
Let $\AF_G$ be the category of {\em almost} {\em finite $G$-spaces}. 
The objects are the almost finite $G$-spaces, and the morphisms $f:X\to Y$ between two almost finite $G$-spaces $X$ and $Y$ are the $G$-equivariant maps (necessarily continuous). 
We write $\Hom_{\AF_G}(X,Y)$ for the set of morphisms $X\to Y$.
\end{defn}

Similarly to the case of finite groups, if $X$ and $Y$ are almost finite $G$-spaces, then $f:X\to Y$ can be expressed as 
$$\big(f_{x,y}\big)_{\!x,y}\qbox{where $(x,y)$ runs through $[G\backslash X]\times[G\backslash Y]$}$$
and $f_{x,y}:G/G_x\to G/G_y$ is of the form $f_{x,y}(G_x)=gG_y$ for some $g\in G$ such that $G_x\leq\ls gG_y$.
In particular, $G/U\cong G/V$ as almost finite $G$-spaces if and only if $U$ and $V$ are $G$-conjugate.

The isomorphism classes of almost finite $G$-spaces form an abelian monoid, with addition given by disjoint unions, and multiplication given by the cartesian product. Recall that
$$(X\times Y)^U=X^U\times Y^U,\quad\forall\;U\leq_oG,\;\forall\;X,Y\in\Ob(\AF_G).$$

\begin{defn}\label{def:cbr}
The {\em Burnside ring $\wh B(G)$} of a profinite group $G$ is the Grothendieck ring of the category $\AF_G$. The elements are the isomorphism classes of virtual almost finite $G$-spaces.
In $\wh B(G)$, we have $1=[G/G]$ and $0=[\emptyset]$, where $[X]$ denotes the isomorphism class of an almost finite $G$-space $X$. 
\end{defn}

Every element of $\wh B(G)$ can be written as a difference $[X]-[Y]$ of the isomorphism class of two almost finite $G$-spaces.
For convenience, we will often make the abuse of notation and omit the brackets to indicate elements of $\wh B(G)$.

In \cite{ds}, the authors show that $\wh B(G)\cong\prlim{N\noo G}B(G/N)$ is a complete topological commutative ring, generated by the isomorphism classes of transitive almost finite $G$-spaces.
We now want to generalise their results to introduce a crossed Burnside ring \cite{OY1} for profinite groups. 
In order to do so, we first want to find a suitable class of profinite groups where a similar construction works. 
Following the same approach as for finite groups, given a profinite group $G$, let $G^c$ denote the $G$-space on which $G$ acts by conjugation. 
We have a decomposition
$$G^c\cong\bigsqcup_{g\in\cl(G)}\ls Gg,$$
where $\cl(G)$ denotes a set of representatives of the conjugacy classes $\ls Gg=\{\ls ug\mid u\in G\}$ of $G$.
The topology on $G^c$ is induced by the subspace topology on each $\ls Gg$. In particular,
\begin{itemize}
\item
$G^c$ is discrete if and only if $|\ls Gg|<\infty$, i.e. if and only if $C_G(g)\leq_oG$ for all $g\in G$.
\item
$G^c$ is essentially finite if and only if $|(G^c)^U|=|C_G(U)|<\infty$ for all
$U\leq_o G$. 
\end{itemize}
These observations lead us to focus on the class of profinite FC-groups.


\section{FC-groups}\label{sec:fc-gp}

\begin{defn}\label{def:fc}
An {\em FC-group} is a group $G$ whose elements have finitely many conjugates. Equivalently, $|G:C_G(g)|<\infty$ for every $g\in G$.
\end{defn}
The term FC means {\em finite conjugacy (classes)}. The class of FC-groups is closed under taking subgroups, finite products and intersections, and quotients. It obviously contains all the abelian groups and all the finite groups.
FC-groups are a subclass of the class of groups with {\em restricted centralisers}, that is, groups in which the centralisers of elements are either finite or of finite index (cf. \cite{shalev}).

If $G$ is FC, the centraliser $C_G(H)=\displaystyle\bigcap_{1\leq i\leq n}C_G(h_i)$ of a finitely generated subgroup $H=\langle h_1,\dots,h_n\rangle$ of $G$ is the intersection of finitely many subgroups of finite index in $G$, and therefore $C_G(H)$ has finite index in $G$ too. 

From \cite[Section 1]{hallp}, we know that if $G$ is a torsion FC-group, then $G$ is locally finite (i.e. every finitely generated subgroup is finite). It follows that $G/Z(G)$ and $G'$ are locally finite for any FC-group $G$. In particular, if $G$ is finitely generated then $|G/Z(G)|$ and $|G'|$ are finite.
In \cite[Lemma 2.6]{shalev}, the author proves that if $G$ is a profinite FC-group, then $G'$ is finite, improving on the previous result, stating that $G'$ is a torsion group.
Therefore, a profinite FC-group is finite-by-abelian.
Recall that in a profinite group, $G'=\ovl{[G,G]}=\bigcap_{N\noo G}[G,G]N$ is the closure of the derived subgroup of $G$.

\vspace{.3cm}
As observed above, the centralisers of finitely generated subgroups of FC-groups have finite index. 
What can we say about the centraliser of a closed subgroup of an FC-group in general?

\begin{prop}\label{prop:cgu}
Let $G$ be an FC-group. TFAE
\begin{itemize}
\item[(i)] $Z(G)$ has finite index in $G$.
\item[(ii)] $\forall\;U\leq G$ of finite index, $C_G(U)$ has finite index in $G$. 
\item[(iii)] $\exists\;U\leq G$ of finite index such that $C_G(U)$ has finite index in $G$. 
\end{itemize}
By contrapositive, $Z(G)$ is a subgroup of infinite index in $G$, if and only if the centraliser of each subgroup of $G$ of finite index is itself a subgroup of infinite index in $G$.
\end{prop}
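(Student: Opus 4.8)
The plan is to prove the cycle $(i)\Rightarrow(ii)\Rightarrow(iii)\Rightarrow(i)$, with essentially all of the content sitting in the last implication. The concluding ``contrapositive'' sentence asserts that $Z(G)$ has infinite index if and only if $C_G(U)$ has infinite index for every finite-index $U$; this is literally $\neg(i)\Leftrightarrow\neg(iii)$, so once the three conditions are shown equivalent it follows by negation and needs no separate argument.

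The first two implications are formal. For $(i)\Rightarrow(ii)$: every central element commutes with all of $G$, so $Z(G)\leq C_G(U)$ for \emph{any} subgroup $U$; if $Z(G)$ has finite index then the larger subgroup $C_G(U)$ does too. For $(ii)\Rightarrow(iii)$ one only needs to exhibit one finite-index subgroup, and $U=G$ (of index $1$) serves; in fact specialising (ii) to $U=G$ already gives $C_G(G)=Z(G)$ of finite index, so (ii) and (i) are trivially interchangeable. The substantive step is $(iii)\Rightarrow(i)$.

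For $(iii)\Rightarrow(i)$, suppose $U\leq G$ has finite index and $C_G(U)$ has finite index. The key point I would use is that the centre is the centraliser of any generating set. Choosing representatives $g_1,\dots,g_n$ for the finitely many cosets of $U$ in $G$, we have $G=\langle U,g_1,\dots,g_n\rangle$, and therefore
$$Z(G)=C_G(G)=C_G(U)\cap\bigcap_{i=1}^n C_G(g_i).$$
Here $C_G(U)$ has finite index by hypothesis, and each $C_G(g_i)$ has finite index precisely because $G$ is an FC-group. Since a finite intersection of finite-index subgroups has finite index, $Z(G)$ has finite index in $G$, which is (i).

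The only conceptual move, and the step I would flag as the crux, is the reduction to a finite generating set: writing $G$ in terms of $U$ together with the finitely many coset representatives is exactly what lets the FC-hypothesis do its work, by bounding the (finitely many) centralisers $C_G(g_i)$. It is instructive to note that FC is genuinely needed here — the group $\Z^2\rtimes\Z/2\Z$, with the nontrivial element acting by inversion, satisfies (iii) (take $U=\Z^2$, whose centraliser is $\Z^2$ itself) yet has trivial centre, the obstruction being that it is not FC. Apart from this observation, the remaining manipulations of finite-index subgroups are routine, so I anticipate no real difficulty.
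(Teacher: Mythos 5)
Your proof is correct and follows essentially the same route as the paper: the implications (i)$\Rightarrow$(ii)$\Rightarrow$(iii) are dismissed as formal, and the crux (iii)$\Rightarrow$(i) is handled exactly as in the paper by writing $Z(G)=C_G(U)\cap\bigcap_{i}C_G(t_i)$ for a transversal $\{t_1,\dots,t_n\}$ of $U$ in $G$ and invoking the FC-hypothesis together with the fact that a finite intersection of finite-index subgroups has finite index. The non-FC counterexample $\Z^2\rtimes\Z/2\Z$ is a nice additional sanity check not present in the paper.
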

Note that in (iii), it is equivalent to assume that such $U$ is a normal subgroup of finite index in $G$ (up to replacing $U$ with its core in $G$).

\begin{proof}
(i)$\Rightarrow$(ii)$\Rightarrow$(iii) are obvious. To show (iii) implies (i), we pick a transversal $\{t_1,\dots,t_n\}$ of $U$ in $G$. Then,
$$Z(G)=C_G(U)\cap\bigcap_{1\leq i\leq n}C_G(t_i)\qbox{has finite index in $G$,}$$
since it is a finite intersection of subgroups of finite index in $G$. The proposition follows.
\end{proof}

Now, if $G$ is profinite FC, Shalev's result leads to the following.

\begin{prop}\label{prop:virtab}
Let $G$ be a profinite FC-group. Then $Z(G)$ is an open subgroup of $G$. In particular, $G$ is virtually abelian.
More generally, if $G$ is a residually finite FC-group whose derived subgroup is finite, then $Z(G)$ is a subgroup of finite index in $G$.
\end{prop}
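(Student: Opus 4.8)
The plan is to prove the more general statement first and then recover the profinite FC-case as a special instance, since a profinite group is residually finite and, by Shalev's result quoted above, has finite derived subgroup when it is FC. I would phrase everything in terms of the following single observation: if a normal subgroup $N\trianglelefteq G$ satisfies $N\cap[G,G]=1$, then $[N,G]\subseteq[G,G]$ always, while $[N,G]\subseteq N$ because $N$ is normal, so $[N,G]\subseteq N\cap[G,G]=1$; this says exactly that $N\subseteq Z(G)$. Thus the whole problem reduces to producing a finite-index normal subgroup that meets the (finite) derived subgroup trivially.

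So suppose $G$ is residually finite with $[G,G]$ finite (I note in passing that the FC hypothesis is not actually needed for this implication). Writing the nontrivial elements of $[G,G]$ as $x_1,\dots,x_k$, residual finiteness gives, for each $i$, a finite-index subgroup not containing $x_i$; replacing it by its normal core, which is still of finite index and still avoids $x_i$ (the core is contained in the original subgroup), I obtain $N_i\trianglelefteq G$ of finite index with $x_i\notin N_i$. Then $N=\bigcap_{i=1}^k N_i$ is a finite-index normal subgroup with $N\cap[G,G]=1$, and by the observation above $N\subseteq Z(G)$. Hence $[G:Z(G)]\le[G:N]<\infty$, proving the general statement.

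For a profinite FC-group $G$ I would then simply apply this: profinite groups are residually finite, and $G'=\ovl{[G,G]}$ is finite by Shalev's result (so in particular $[G,G]$ itself is finite). Therefore $Z(G)$ has finite index; being a closed subgroup of finite index in the compact group $G$ it is open (equivalently, it contains the open subgroup $N$ constructed above, and a subgroup containing an open subgroup is open). Finally, $Z(G)$ is abelian and of finite index, so $G$ is virtually abelian.

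I do not anticipate a genuine obstacle here, since everything rests on the elementary containment $[N,G]\subseteq N\cap[G,G]$ once $N$ has been chosen. The only step requiring any care is the construction of $N$: it uses residual finiteness applied to the \emph{finitely many} nontrivial elements of the finite derived subgroup, together with the passage to normal cores to keep the subgroups normal while preserving both finite index and the avoidance of each $x_i$. It is worth emphasising that residual finiteness is essential and cannot be dropped: for a general abstract group the converse of Schur's theorem fails, so finiteness of $[G,G]$ alone does not force $Z(G)$ to have finite index.
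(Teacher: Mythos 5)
Your proof is correct and follows essentially the same route as the paper: both arguments use residual finiteness to find a finite-index normal subgroup $N$ avoiding the finitely many nontrivial elements of the finite derived subgroup, and then conclude $N\subseteq Z(G)$ from the containment $[N,G]\subseteq N\cap[G,G]=1$. The only cosmetic differences are your explicit passage to normal cores and your spelled-out justification that the closed finite-index subgroup $Z(G)$ is open, both of which the paper leaves implicit.
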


As a consequence of Proposition~\ref{prop:virtab}, if $G$ is profinite FC, then the centraliser of any subgroup of $G$ is open in $G$. 

\begin{proof}
Since $G$ is residually finite, for each $x\in G'$, there exists a normal subgroup $U_x\triangleleft G$ of finite index in $G$ such that $x\notin U_x$. 
Set $U=\dst\bigcap_{x\in G'}U_x$. Then, $U\triangleleft G$ has finite index in $G$, and $U\cap G'=1$. Moreover, $[G,U]\leq G'\cap U=1$ shows that $U$ is a central subgroup. 
The result follows.
\end{proof}

\smallskip
Note that the profinite completion of an FC-group need not be an FC-group, as seen on a variant of P. Hall's example \cite[Example 2.1]{tomk}.

\begin{example}\label{ex:hall}
Let $p$ be a prime. For each $n\in\Z$, let:
$$X_n=\langle x_n,y_n,z_n~|~x_n^p=y_n^p=z_n^p=1,~[x_n,y_n]=z_n\rangle\cong p^{1+2}_+$$
be an extraspecial $p$-group of order $p^3$ and exponent $p$. For every $n\in\Z$, set
$$g_{2n-1}=x_{2n-1}x_{2n}\qbox{and}g_{2n}=y_{2n}y_{2n+1},\qbox{and define}G=\langle g_n~|~n\in\Z\rangle.$$
By definition, $[g_{2n-1},g_{2n}]=z_{2n}$ and $[g_{2n},g_{2n+1}]=z_{2n+1}^{-1}$, with $[g_i,g_j]=1$ whenever
$|j-i|\geq2$.
We have $G'=Z(G)=\langle z_n~|~n\in\Z\rangle$ is an infinite elementary abelian $p$-group, and $G/G'$ too. Moreover, $G$ has exponent $p$ and is nilpotent of class $2$.

For $g\in G$, let $\{g^G\}$ denote its conjugacy class, and for a subgroup $H$ of $G$, let $\langle H^G\rangle$ denote its normal closure. We have
$$\{g_n^G\}=\{g_nz_n^iz_{n+1}^j~|~0\leq i,j<p\}\qbox{and}\langle H^G\rangle\leq HZ(G).$$

Every element of the abstract group $G$ can be written in a unique way as a finite product $w=g_{i_1}^{a_1}\cdots g_{i_n}^{a_n}z_{j_1}^{b_1}\cdots z_{j_m}^{b_m}$ for some integers $i_1<\dots<i_n$ and $j_1<\dots<j_m$, and for integers
$0<a_1,\dots,a_n,b_1,\dots,b_m<p$.
We calculate $C_G(w)=\langle g_l\mid |l-i_s|>1\;,\;\forall\;1\leq s\leq n\rangle Z(G)$, and note that $C_G(w)$ is a normal subgroup of $G$ of finite index. 
Therefore, as an abstract group, $G$ is FC since the conjugacy class of any word $g_{i_1}^{e_1}\cdots g_{i_k}^{e_k}$ is finite.
However, in the profinite completion $\hat G$ of $G$, the elements which cannot be expressed as words of finite length in the $g_i$'s have conjugacy classes of infinite size. 
(For instance the conjugacy class of the element whose image in every finite quotient is the image of $g_1g_2g_3\cdots$ is infinite.)

\end{example}


\section{The crossed Burnside ring for profinite FC-groups}\label{sec:xb}

Let $G$ be a profinite group, and let $G^c$ denote the $G$-space on which $G$ acts by conjugation. 
Since
$$
(G^c)^U=C_G(U),\;\forall\;U\leq_oG\qbox{and}G^c\cong\bigsqcup_{g\in\cl(G)}\ls Gg$$
as $G$-space, $G^c$ is almost finite if and only if $G$ is FC and $Z(G)$ is finite, that is, if and only if $G$ is finite. 
Let us relax the requirement for $G^c$ to be almost finite, and only ask for $G$ to be a discrete $G$-space. By the above, this hold if and only if $G$ is profinite FC, and we then have $\ls Gg\cong G/C_G(g)$ for all $g\in G$.

\begin{defn}\label{def:xafg} 
Let $G$ be a profinite FC-group.
Define the category $\xAF_G$ of almost finite {\em crossed} $G$-spaces to be the category whose objects are the morphisms
$f:X\to G^c$, where $X$ is almost finite and $f$ is $G$-equivariant.
The morphisms $\phi:(f_1:X_1\to G^c)\longrightarrow(f_2:X_2\to G^c)$ between two objects in $\xAF_G$ are the morphisms $\phi\in\Hom_{\AF_G}(X_1,X_2)$ such that $f_1=f_2\phi$.
\end{defn}

\vspace{.3cm}
Let $X$ be an almost finite $G$-space.
Then a map $w:X\to G^c$ decomposes as a sum $\sqcup w_x$, where $X=\dst\bigsqcup_{x\in[G\backslash X]}G/G_x$ and $w_x:G/G_x\to G^c$ is $G$-equivariant.
Explicitly, $w_x(gG_x)=\ls g{w_x(G_x)}$, for some element $w_x(G_x)\in C_G(G_x)$. 

We define the sum and product of almost finite crossed $G$-spaces using disjoint unions and cartesian products, similarly to the case of finite groups. 
In particular, if $w_a:G/H\to G^c$ and $w_b:G/K\to G^c$ are transitive almost finite crossed $G$-spaces, their product is the almost finite crossed $G$-space
$$\bigsqcup_{g\in[H\backslash G/K]}\big(w_{a\cdot\ls gb}~:~(G/H\cap\ls gK)\longrightarrow G^c\big),$$
see \cite[Lemma 2.13(7)]{OY1}.
Two morphisms $w_a:G/H\to G^c$ and $w_b:G/K\to G^c$ are isomorphic if and only if there exists $g\in G$ such that $K=\ls gH$ and $b=\ls ga$.
With these operations, the isomorphism classes of almost finite crossed $G$-spaces form an abelian monoid. 

\begin{defn}\label{def:xb}
The {\em crossed Burnside ring of $G$} is the Grothendieck ring of the category $\xAF_G$. The elements are the isomorphism classes of virtual almost finite crossed $G$-spaces.
In particular, $1_{\wh{B^c}(G)}=[w_1:G/G\to G^c]$ and $0_{\wh{B^c}(G)}=[\emptyset\to G^c]$, where the square brackets denote isomorphism classes (which we will omit if there is no confusion), where $w_1(G)=1$ and $\emptyset$ is the initial object of the category $\AF_G$. 
\end{defn}

The following observation is immediate (cf. \cite[Section IV.8]{td}). 

\begin{lemma}
If $G$ is finite, then $\wh{B^c}(G)=B^c(G)=B(G^c)$, where $B(G^c)$ is the evaluation of the Burnside Green functor for $G$ at the $G$-set $G^c$.
\end{lemma}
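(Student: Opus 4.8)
The plan is to show that all three rings arise as the Grothendieck ring of one and the same category, so that the problem reduces to matching category definitions and then checking that the ring structures (addition, multiplication, and unit) agree. The starting observation is that for a finite group $G$ the notions of almost finite $G$-space and finite $G$-set coincide. Indeed, when $G$ is finite the topology is discrete, so every subgroup is open; taking $U=1$ in the essential finiteness condition gives $|X|=|X^1|<\infty$, so an almost finite $G$-space $X$ is just a finite $G$-set, and conversely every finite $G$-set is almost finite, since the subconjugacy condition in \eqref{eq:basic} is vacuous when there are only finitely many orbits. As the morphisms in both $\AF_G$ and $\set_G$ are the $G$-equivariant maps (automatically continuous on discrete spaces), this gives an identification of categories $\AF_G=\set_G$.

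Next I would transport this to the crossed setting. Comparing Definition~\ref{def:xafg} with the definition of $\xset_{(G,S)}$ for $S=G^c$: an object of $\xAF_G$ is a $G$-equivariant map $f:X\to G^c$ with $X$ almost finite, which by the first step is exactly a crossed $G$-set over $G^c$; and in both descriptions the morphisms $\phi$ are the $G$-maps satisfying $f_1=f_2\phi$. Hence $\xAF_G=\xset_{(G,G^c)}$. It then remains to verify that the monoidal structures match: the sum is disjoint union in both, the product of transitive objects is given by the same double-coset formula (cited in Section~\ref{sec:recap} as \cite[Lemma 2.13(7)]{OY1}), and the unit is $w_1:G/G\to G^c$ in both. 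Passing to Grothendieck rings then yields $\wh{B^c}(G)=B^c(G)=\xB(G,G^c)$.

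For the identification $B^c(G)=B(G^c)$, I would recall tom Dieck's description of the Burnside Green functor \cite[Section IV.8]{td}: for a $G$-set $Y$, the value $B(Y)$ is the Grothendieck group of the slice category $\set_G/Y$ of finite $G$-sets over $Y$, with addition given by disjoint union; and when $Y$ is a $G$-monoid, the multiplication on $B(Y)$ is the convolution induced by the monoid product $Y\times Y\to Y$, that is, $(f_1:X_1\to Y)\cdot(f_2:X_2\to Y)$ is the map $X_1\times X_2\to Y$ sending $(x_1,x_2)\mapsto f_1(x_1)f_2(x_2)$. Taking $Y=G^c$, the slice category $\set_G/G^c$ is precisely $\xset_{(G,G^c)}$, the convolution product is exactly the product of crossed $G$-sets, and the unit $u:G/G\to G^c$ with $u(G)=1$ agrees on both sides. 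Therefore $B(G^c)=\xB(G,G^c)=B^c(G)$ as rings.

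The main obstacle is this last step: the additive identifications are immediate, but one must check that the ring multiplication produced by the Green-functor machinery of \cite{td}—a pullback along the monoid multiplication—coincides on the nose with the pointwise crossed product $(x_1,x_2)\mapsto f_1(x_1)f_2(x_2)$ of \cite{OY1}, and that the two prescriptions for the multiplicative unit match. Both reduce to unwinding definitions, but the bookkeeping needed to reconcile tom Dieck's external-product-then-restrict formulation with the pointwise formula is where care is required; the remaining identifications are a direct comparison of categorical data.
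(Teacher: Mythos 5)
Your proof is correct and follows exactly the route the paper intends: the paper dismisses this lemma as ``immediate (cf.\ \cite[Section IV.8]{td})'', and your argument simply unwinds that claim, identifying $\AF_G=\set_G$ and $\xAF_G=\xset_{(G,G^c)}$ for finite $G$ and then matching the crossed product with the Green-functor convolution on $B(G^c)$ from tom Dieck. The only difference is that you supply the bookkeeping the paper omits, which is a faithful expansion rather than a different approach.
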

  
As for finite groups, there is an injective ring homomorphism $\wh B(G)\to\wh{B^c}(G)$, defined by mapping a virtual almost finite $G$-space $X$ to $w_1:X\to G^c$, where $w_1(x)=1$ for all $x\in X$.

\cite[Lemma 2.2.2]{b-mackey} extends to our context.
\begin{lemma}\label{lem:iso-xaf}
$(v:X\to G^c)\cong(w:Y\to G^c)$ in $\xAF_G$ if and only if 
$$|\Hom_{\xAF_G}\big((w_g:G/H\to G^c),(v:X\to G^c)\big)|=|\Hom_{\xAF_G}\big((w_g:G/H\to G^c),(w:Y\to G^c)\big)|,$$
for all $(w_g:G/H\to G^c)\in\xAF_G$.
\end{lemma}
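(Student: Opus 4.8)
The plan is to adapt the Yoneda-style argument from \cite[Lemma 2.2.2]{b-mackey} to the almost finite setting, exploiting that the transitive crossed $G$-spaces $(w_g:G/H\to G^c)$ form a system of ``test objects'' that detects isomorphism. The forward direction is trivial: if $\phi:(v:X\to G^c)\to(w:Y\to G^c)$ is an isomorphism in $\xAF_G$, then post-composition with $\phi$ gives a bijection between the two Hom-sets for every test object, so in particular they have the same cardinality. The substance is the converse.

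For the converse, the key is to show that the Hom-numbers $|\Hom_{\xAF_G}((w_g:G/H\to G^c),(v:X\to G^c))|$, ranging over all transitive test objects, determine the isomorphism class of $(v:X\to G^c)$. First I would reduce to computing these Hom-numbers explicitly. Writing $X=\bigsqcup_{x\in[G\backslash X]}G/G_x$ with structure maps $v_x(gG_x)=\ls g{v_x(G_x)}$ and $v_x(G_x)=a_x\in C_G(G_x)$, a morphism $(w_g:G/H\to G^c)\to(v:X\to G^c)$ lands in a single orbit $G/G_x$ (since $G/H$ is transitive), and a $G$-map $\phi:G/H\to G/G_x$ with $\phi(H)=sG_x$ exists precisely when $H\leq\ls s{G_x}$; the crossed condition $v_x\phi=w_g$ then forces $\ls s{a_x}=g$ at the coset $H$, i.e. $g=\ls s{a_x}$ with $s$ constrained modulo $G_x$. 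Thus the Hom-number counts, orbit by orbit, the cosets $sG_x$ with $H\leq\ls s{G_x}$ and $\ls s{a_x}=g$. This is exactly the crossed analogue of the classical ``mark'' counting $|(X^H)|$ refined by the value of the labelling map.

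The main step is then to recover $(v:X\to G^c)$ from these refined marks. I would argue that the function sending each pair $(H,g)$ (with $H\leq_oG$ and $g\in C_G(H)$, up to $G$-conjugacy) to the corresponding Hom-number is a complete isomorphism invariant of the crossed $G$-space. The cleanest route is an induction on orbit type using a triangularity/Möbius-inversion argument over the poset of open subgroups: for a minimal-index $H$ appearing, the marks pin down the multiplicity of each transitive summand $(w_a:G/H\to G^c)$, and one subtracts off its contribution and descends. Here the almost finiteness from \eqref{eq:basic} is essential, since it guarantees that for each $U\leq_oG$ only finitely many orbits $G/G_x$ can receive a map from $G/U$, so all these counts are finite and the inversion is well-defined. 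The profinite FC-hypothesis enters through Proposition~\ref{prop:virtab}, which ensures $C_G(H)$ is open so that $G^c$ is a genuine discrete $G$-space and the labels $a_x$ live in an honest coset space.

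The hard part will be making the Möbius inversion rigorous over the (infinite but locally finite) poset of conjugacy classes of open subgroups, and checking that the bookkeeping of the labelling data $a_x\in C_G(G_x)$ interacts correctly with conjugacy: two summands $(w_a:G/H\to G^c)$ and $(w_b:G/K\to G^c)$ coincide iff $K=\ls gH$ and $b=\ls ga$, so the invariant must be read as a function on $G$-conjugacy classes of pairs $(H,a)$, and I would need to verify that the counting is constant on these classes and that distinct classes are separated by some test object. Once the refined mark homomorphism is shown to be injective on isomorphism classes, equality of all Hom-numbers forces $(v:X\to G^c)\cong(w:Y\to G^c)$, completing the proof.
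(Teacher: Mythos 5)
Your proposal is correct and takes essentially the same route as the paper: decompose $X$ and $Y$ into orbits, compute the Hom-sets from the transitive test objects $(w_g:G/H\to G^c)$ as refined marks (cosets $sG_x$ with $H\leq\ls s{G_x}$ and $g=\ls s{a_x}$, finite by almost finiteness), and conclude that equality of all these counts forces the same multiplicity of each transitive summand, hence isomorphism. The only difference is one of explicitness: the paper asserts this last step tersely (leaning on the finite-group argument of \cite[Lemma 2.2.2]{b-mackey}), whereas you spell out the triangularity/M\"obius-inversion argument over orbit types that justifies it.
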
 

\begin{proof}
Given almost finite $G$-spaces $X$ and $Y$, then $X\cong Y$ in $\AF_G$ if and only if $|X^U|=|Y^U|$ for all $U\leq_oG$.
Write $X=\sqcup G/G_x$ and $v=\sqcup v_{a_x}$, where $a_x\in C_G(G_x)$, and $x$ runs through a set of representatives of the $G$-orbits of $X$. 
Similarly, write $Y=\sqcup G/G_y$ and $w=\sqcup w_{b_y}$. If $[w_g:G/H\to G^c]\in\wh{B^c}(G)$, then
\begin{multline*}\Hom_{\xAF_G}\big((w_g:G/H\to G^c),(v:X\to G^c)\big)=\\
=\bigsqcup_{x\in[G\backslash X]}\Hom_{\xAF_G}\big((w_g:G/H\to G^c),(v_{a_x}:G/G_x\to G^c)\big)\end{multline*}
and similarly for $Y$. Note that these are finite sets because $\Hom_{\AF_G}(G/H,X)\cong X^H$, via the correspondence $(\varphi:G/H\to X)\mapsto\varphi(H)$, is a finite set, for all $H\leq_oG$ and for all $X\in\AF_G$.
Now,
$\Hom_{\xAF_G}\big((w_g:G/H\to G^c),(v_{a_x}:G/G_x\to G^c)\big)$ is the subset of $\Hom_{\AF_G}(G/H,G/G_x)=\big\{m_s:H\mapsto sG_x\mid H\leq\ls sG_x\big\}$ 
formed by the  almost finite crossed $G$-spaces such that, if $H\leq\ls sG_x$, then $w_g(H)=g=\ls sa_x=w_{a_x}(sG_x)$. 
The cardinality of these two sets of homomorphisms coincide if and only if the almost finite crossed $G$-spaces $(v:X\to G^c)$ and $(w:Y\to G^c)$ have the same number of $G$-orbits of the same type.
\end{proof}

In \cite[Section 2]{ds}, the authors prove that $\wh B(G)$ is a complete topological ring isomorphic to $\dst\prlim{N\noo G}B(G/N)$ via the ring homomorphism induced by the product of the fixed point maps $\fix_N:\wh B(G)\to B(G/N)$ defined below.
More generally, let $U\leq_oG$ and let $X\in\AF_G$. 
Then $N_G(U)$ acts on the finite set of $U$-fixed points $X^U$. Indeed, for all $x\in X^U$, all $u\in U$ and all $g\in N_G(U)$, we have $u(gx)=g((u^g)x)=gx$.  
Since $U$ acts trivially on $X^U$, we can regard $X^U$ as a finite $N_G(U)/U$-set. 
Since $(X\sqcup Y)^U=X^U\sqcup Y^U$, $(X\times Y)^U=X^U\times Y^U$ and $X\cong Y\Longrightarrow X^U\cong Y^U$, for any subgroup $U$ of $G$ and any $G$-spaces $X$ and $Y$, this function extends to a ring homomorphism 
$\wh B(G)\to B(N_G(U)/U)$, for all $U\leq_oG$.
Now, let $N\noo G$ and let $V\leq_oG$. Define
\begin{equation}\label{eq:fix}
\fix_N(G/V)=(G/V)^N=\left\{\begin{array}{ll}
G/V&\qbox{if $N\leq V$.}\\
\emptyset&\qbox{otherwise,}\end{array}\right.
\end{equation}
A routine exercise shows that the maps $\fix_N$ are surjective ring homomorphisms. 
Each such map has a section, called {\em inflation}, $\Inf_{G/N}^G:B(G/N)\to\wh B(G)$,  which sends a finite $G/N$-set to itself, regarded as an almost finite $G$-space on which $N$ acts trivially. 
Define 
$$\fix=\prod_{N\noo G}\fix_N~:~\wh B(G)\longrightarrow\prod_{N\noo G}B(G/N).$$ 
This is the injective ring homomorphism used in \cite[Section 2]{ds} to show that $\wh B(G)\cong\prlim{N\noo G}B(G/N)$ is a complete topological ring.
In this topology, a basis of open ideals is $\{\ker(\fix_N)\mid N\noo G\}$.

\vspace{.3cm}
Let $G$ be a profinite FC-group and let $(w_a:G/U\to G^c)$ be a transitive almost finite crossed $G$-space, where $U\leq_oG$ and $a\in C_G(U)$.
For $N\noo G$,  the fixed point map $\fix_N:\wh B(G)\to B(G/N)$ induces a ring homomorphism $\xfix_N:\wh{B^c(G)}\to B^c(G/N)$, where
$$\fix_N(w_a:G/U\to G^c)=\left\{\begin{array}{ll}
(w_{aN}:G/U\to(G/N)^c)&\qbox{if $N\leq U$, or}\\
0_{B^c(G/N)}&\qbox{otherwise.}\end{array}\right.$$
Note that $\xfix_N$ is neither injective nor surjective. 

If $N_2,N_1\noo G$ with $N_2\leq (U\cap N_1)$, then 
\begin{align*}
\xfix_{N_1/N_2}&:B^c(G/N_2)\to B^c(G/N_1)\\
\xfix_{N_1/N_2}(w_{aN_2}:G/U\to (G/N_2)^c)&=\left\{\begin{array}{ll}
(w_{aN_1}:G/U\to(G/N_1)^c)&\qbox{if $N_1\leq U$, or}\\
0_{B^c(G/N_1)}&\qbox{otherwise.}\end{array}\right.
\end{align*}
Hence 
$$\xfix=(\xfix_N)_{N\noo G}: \wh{B^c}(G)\longrightarrow\prod_{N\noo G}\wh{B^c}(G/N)\qbox{is a ring homomorphism,}$$
and, given $N_1,N_2\noo G$ with $N_2\leq N_1$, we have 
$$\xfix_{N_1}=\xfix_{N_1/N_2}\xfix_{N_2}.$$

We aim to show that $\wh{B^c}(G)\cong\prlim{N\noo G}B^c(G/N)$ (cf. \cite[Definition 1.1.3 and Proposition 1.1.4]{wilson}). 
That is, we want to show that $\wh{B^c}(G)$ is isomorphic to the subring of $\dst\prod_{N\noo G}B^c(G/N)$ formed by all the elements of the form
$\dst\big(w_N:x_N\to (G/N)^c\big)_{\!N\noo G}\in\prod_{N\noo G}B^c(G/N)$ with
$$\xfix_{N_1/N_2}(w_{N_2}:X_{N_2}\to G^c)=(w_{N_1}:X_{N_1}\to(G/N_1)^c),$$
for all $N_1,N_2\noo G$ with $N_2\leq N_1$.

By \cite[Section 2]{ds}, we know that $\fix:\wh B(G)\to\prod_{N\noo G}B(G/N)$ is an injective ring homomorphism, and that $\fix(\wh B(G))\cong\prlim{N\noo G}B(G/N)$.

For the injectivity of $\xfix$, suppose that $\xfix(w:X\to G^c)=(0_{B^c(G/N)})_{N\noo G}$. 
Then $X^N=0_{B(G/N)}$ for all $N\noo G$, which forces $X=0_{\wh B(G)}$ too, by injectivity of $\fix$. 
Since $0_{\wh B(G)}=[\emptyset]$ is the initial object in the category $\AF_G$, there is a unique almost finite crossed $G$-space with domain $0_{\wh B(G)}$, it follows that $[w:X\to G^c]=0_{\wh{B^c}(G)}$.

Let now $\dst\big(w_N:X_N\to(G/N)^c\big)_{\!N\noo G}\in\prlim{N\noo G}B^c(G/N)$ be a nonzero element. 
The sequence of the domains produces a unique element $X\in\wh B(G)$. Suppose that $X$ is the isomorphism class of 
$\sum_{U\in\CO_G}\lambda_UG/U$, where $\CO_G$ denotes a set of representatives of the conjugacy classes of open subgroups of $G$, and the $\lambda_U$ are integers. We can then write 
$$w_N=\sum_{\overset{U\in\CO_G}{N\leq U}}\sum_{1\leq i\leq|\lambda_U|}w_{a_{U/N,i}},\qbox{with}G/U=(G/N)\big/(U/N),$$
and where $a_{U/N,i}\in C_{G/N}(U/N)$, for all $1\leq i\leq|\lambda_U|$, and all $U\in\CO_G$ with $N\leq U$, $N\noo G$. 
By convention, if $\lambda_U=0$, then $\sum_{1\leq i\leq|\lambda_U|}w_{a_{U/N,i}}=0$.

Note that if $N_1,N_2\noo G$ with $N_2\leq N_1$, then $C_{G/N_2}(U/N_2)N_1\big/N_1\leq C_{G/N_1}(U/N_1)$, via the quotient map $G/N_2\to G/N_1$.
We can pick $a_{U,N,i}\in G$ such that $a_{U,N,i}N/N=a_{U/N,i}$ for $N\noo G$, and our definition of $\xfix$ implies that $a_{U,N_2,i}N_1=a_{U,N_1,i}N_1$.
The elements $a_{U,N,i}$ satisfy $[a_{U,N,i},U]\subseteq N$.
Hence, for $U\in\CO_G$ with $\lambda_U\neq0$, and for $1\leq i\leq|\lambda_U|$, let
$$\mathbf a_{U,i}=\bigcap_{\overset{N\noo G}{N\leq U}}a_{U,N,i}N.$$
Then $\mathbf a_{U,i}\neq\emptyset$ is a closed subset of $G$ (cf. \cite[Proposition 1.1.4]{RZ}), which consists of a single element $a_{U,i}$.
Indeed, suppose that $a,b\in\mathbf a_{U,i}$. That is, $a,b\in a_{U,N,i}N$, or equivalently, $b^{-1}a\in N$ for all $N\noo G$, which forces $a=b$ because $\cap_{N\noo G}N=1$.
Therefore $\mathbf a_{U,i}=\{a_{U,i}\}$, where
$$a_{U,i}\in \bigcap_{\overset{N\noo G}{N\leq U}}\{g\in G\mid [g,U]\subseteq N\}.$$
Putting $a_{U,N,i}=1$ if $N\not\leq U$, we have
$(a_{U,N,i}N)_{N\noo G}\in\prlim{N\noo G}C_{G/N}(UN/N)=C_G(U)$, we conclude that $a_{U,i}\in C_G(U)$
(cf. \cite[Exercise 0.4(2)]{wilson}).

Consequently, $(w_{a_{U,i}}:G/U\to G^c)\in\xAF_G$, and
$$\xfix(w_{a_{U,i}}:G/U\to G^c)_N=\left\{\begin{array}{ll}
[w_{a_{U/N,i}}:G/U\to(G/N)^c]\in B^c(G/N)&\:\hbox{if $N\leq U$}\\
0_{B^c(G/N)}&\;\hbox{otherwise,}\end{array}\right.$$
saying that 
$\xfix(w_{a_{U,i}}:G/U\to G^c)\in\prlim{N\noo G}B^c(G/N)$. We have thus proved the following.

\begin{prop}\label{prop:xb-top-ring}
Let $G$ be a profinite FC-group. Then $\xfix$ induces a ring isomorphism 
$$\wh{B^c}(G)\stackrel{\cong}{\longrightarrow}\prlim{N\noo G}B^c(G/N).$$
\end{prop}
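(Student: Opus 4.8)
The plan is to prove that $\xfix$ is an injective ring homomorphism whose image is exactly $\prlim{N\noo G}B^c(G/N)$, bootstrapping from the corresponding statement for the ordinary Burnside ring, namely $\fix:\wh B(G)\cong\prlim{N\noo G}B(G/N)$ of \cite{ds}, and supplying the extra input needed to track the crossed ``decorations'' $a\in C_G(U)$ through the inverse system. Since the discussion preceding the statement already checks that $\xfix$ is a ring homomorphism and that the relation $\xfix_{N_1}=\xfix_{N_1/N_2}\xfix_{N_2}$ places its image inside the inverse limit, only two things remain: injectivity of $\xfix$, and surjectivity of $\wh{B^c}(G)$ onto $\prlim{N\noo G}B^c(G/N)$.

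For injectivity I would take an element $(w:X\to G^c)$ in the kernel and forget the crossed structure. Its underlying virtual $G$-space $X$ then satisfies $\fix_N(X)=0$ for every $N\noo G$, so $X=0$ in $\wh B(G)$ by the injectivity of $\fix$. Because $0_{\wh B(G)}=[\emptyset]$ is the initial object of $\AF_G$, the unique crossed $G$-space with that domain is the zero object of $\xAF_G$, whence $(w:X\to G^c)=0_{\wh{B^c}(G)}$. This step is routine once the underlying-space injectivity is taken as given.

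Surjectivity is the substantive step and the main obstacle. Given a coherent family $\big(w_N:X_N\to(G/N)^c\big)_{N\noo G}$ in the inverse limit, the underlying $G$-spaces $(X_N)_N$ are coherent for $\fix$ and so assemble to a unique $X=\sum_{U\in\CO_G}\lambda_U\,G/U\in\wh B(G)$. The real work lies in reconstructing the crossed labels. At each level the transitive summand over $G/U$ carries an element $a_{U/N,i}\in C_{G/N}(U/N)$, and coherence of the family forces $a_{U,N_2,i}N_1=a_{U,N_1,i}N_1$ once the labels are lifted to elements $a_{U,N,i}\in G$ subject to $[a_{U,N,i},U]\subseteq N$. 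For fixed $U$ and index $i$ the nested cosets $a_{U,N,i}N$ have, by compactness and the finite intersection property, a nonempty closed intersection $\mathbf a_{U,i}$ (cf. \cite[Proposition 1.1.4]{RZ}); since $\bigcap_{N\noo G}N=1$ this intersection is a single element $a_{U,i}$. Centrality $a_{U,i}\in C_G(U)$ then follows from the identification $C_G(U)=\prlim{N\noo G}C_{G/N}(UN/N)$, so that $(w_{a_{U,i}}:G/U\to G^c)$ is a genuine object of $\xAF_G$, and $\sum_{U}\sum_{i}w_{a_{U,i}}$ maps under $\xfix$ to the prescribed family.

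The point where the FC-hypothesis is indispensable, and where I would be most careful, is in keeping the bookkeeping of the crossed labels consistent across levels. Because $G$ is profinite FC, Proposition~\ref{prop:virtab} ensures $C_G(U)$ is open and $G^c$ is a discrete $G$-space, which is exactly what makes each $\xfix_N$ defined and lets the inverse system of labels behave like a coherent system of cosets in a profinite group. The delicate part is matching the $i$-th transitive summand at level $N_1$ with the correct summand at level $N_2$ so as to respect simultaneously the constraint $[a_{U,N,i},U]\subseteq N$ and the conjugacy relation $(w_a:G/H)\cong(w_b:G/K)\iff K=\ls gH,\ b=\ls ga$ that classifies transitive crossed $G$-spaces; once that matching is pinned down, the whole reconstruction reduces to the single-element intersection argument above.
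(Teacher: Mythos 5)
Your proposal is correct and follows essentially the same route as the paper's own argument: injectivity by passing to the underlying $G$-space, invoking the injectivity of $\fix$, and using that $[\emptyset]$ is initial in $\AF_G$; surjectivity by assembling $X=\sum_U\lambda_U\,G/U$, lifting the labels $a_{U/N,i}$ to elements of $G$ with $[a_{U,N,i},U]\subseteq N$, taking the single-point intersection $\bigcap_N a_{U,N,i}N$ by compactness, and deducing $a_{U,i}\in C_G(U)$ from $C_G(U)=\prlim{N\noo G}C_{G/N}(UN/N)$. The summand-matching subtlety you flag is treated with the same level of implicitness in the paper itself.
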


\vspace{.3cm}
The crossed Burnside ring of a profinite FC-group has some of the properties similar to those of the crossed Burnside ring of a finite group. 
Let $R$ be a commutative ring, and write $\wh{B^c_R}(G)=R\otimes_\Z\wh{B^c}(G)$.
Given $U\leq_oG$ and $a\in C_G(U)$, we have $\dst\sum_{g\in [N_G(U)/U]}\ls ga\in Z(RC_G(U))$, since $\ls ga\in C_G(\ls gU)=C_G(U)$ for all $g\in N_G(U)$. As in \cite[Section 2.3]{b-mackey}, we obtain a ring homomorphism: 
$$z_U:\wh{B^c_R}(G)\longrightarrow Z(RC_G(U)),\quad
z_U(w:X\to G^c)=\sum_{\overset{x\in[G\backslash X]}{U\leq_GG_x}}\sum_{g\in[N_G(G_x)/G_x]}\ls ga_x,$$
where 
$$(w:X\to G^c)=\dst\bigsqcup_{x\in[G\backslash X]}(w_{a_x}:G/G_x\to G^c)$$ 
is an almost finite crossed $G$-space. Here, $a_x\in C_G(G_x)$ for all $x$, and the notation $U\leq_GG_x$ means that there exists $h\in G$ such that $U\leq\ls hG_x$.
The map $z_U$ extends to virtual almost finite crossed $G$-spaces, and since $|G:U|<\infty$, the above sums are finite.
Therefore $z_U$ is well defined, and we obtain a ring homomorphism
$$\zeta:\wh{B^c_R}(G)\longrightarrow\prod_{U\in\CO_G} Z(RC_G(U)),\quad\zeta(\hat w)=\big(z_U(\hat w)\big)_{U\in\CO_G},\;\forall\;\hat w\in\wh{B^c_R}(G),$$
where $\CO_G$ denotes a set of representatives of the conjugacy classes of open subgroups of $G$.
The same argument as in \cite[Lemma 2.3.2]{b-mackey} shows the following (for the proof, we now use $K\leq_oG$ with $|G:K|$ minimal such that $\hat w=\sum_i\lambda_U(w_{a_U}:G/U\to G^c)$ has a nonzero $\lambda_K$).

\begin{lemma}\label{lem:brauer-inj}
If $R$ is torsionfree, then $\zeta$ is injective. Consequently, we obtain a mapping $\dst\spec(\prod_{U\in\CO_G} Z(RC_G(U)))\to\spec(\wh{B^c_R}(G))$.
\end{lemma}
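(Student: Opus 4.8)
The plan is to adapt the triangularity argument of \cite[Lemma 2.3.2]{b-mackey} to the profinite setting, exploiting that an open subgroup of $G$ has finite index, so that every sum defining the maps $z_U$ is finite. I would start from a nonzero $\hat w$ and write it in the $R$-basis of $\wh{B^c_R}(G)$ as $\hat w=\sum_{(U,a)}\lambda_{U,a}\,[w_a:G/U\to G^c]$, where $U$ ranges over $\CO_G$ and, for each such $U$, $a$ ranges over representatives of the $N_G(U)$-conjugacy classes in $C_G(U)$ (this is exactly the isomorphism criterion recorded for transitive almost finite crossed $G$-spaces). I would then follow the hint and pick $K\in\CO_G$ of \emph{minimal} index $|G:K|$ among the subgroups $U$ for which some $\lambda_{U,a}\neq0$, and evaluate $z_K(\hat w)$.

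The key point I would establish is a triangularity property of $z_K$. From the definition, $z_K([w_a:G/U\to G^c])\neq0$ forces $K\leq_G U$, hence $|G:U|\leq|G:K|$; combined with the minimality of $|G:K|$ this yields $|G:U|=|G:K|$, and since $K$ is subconjugate to $U$ with $U$ of the same finite order, $U$ must be $G$-conjugate to $K$. Choosing $U=K$ as the representative of each surviving class reduces the computation to $z_K(w_a:G/K\to G^c)=\sum_{g\in[N_G(K)/K]}\ls ga$. As $a\in C_G(K)$, the summand $\ls ga$ depends only on the coset $gK$, and grouping cosets by the value of $\ls ga$ gives $z_K(w_a:G/K\to G^c)=|C_{N_G(K)}(a):K|\cdot s_a$, where $s_a=\sum_b b$ is the sum over the $N_G(K)$-orbit of $a$ in $C_G(K)$.

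To conclude injectivity, I would observe that $C_G(K)\trianglelefteq N_G(K)$, so each $s_a$ is a sum of $C_G(K)$-conjugacy class sums and therefore lies in $Z(RC_G(K))$; moreover, distinct $N_G(K)$-orbits have disjoint supports, so the elements $s_a$ are $R$-linearly independent. Hence $z_K(\hat w)=\sum_a\lambda_{K,a}\,|C_{N_G(K)}(a):K|\,s_a$ vanishes only if $\lambda_{K,a}\,|C_{N_G(K)}(a):K|=0$ in $R$ for every $a$. Since some $\lambda_{K,a}\neq0$, since each index $|C_{N_G(K)}(a):K|$ is a positive integer, and since $R$ is torsionfree, at least one of these coefficients is nonzero; thus $z_K(\hat w)\neq0$, so $\zeta(\hat w)\neq0$, and $\zeta$ is injective.

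The main obstacle I anticipate is the bookkeeping in the reduction step: one must keep the basis representatives controlled by $N_G(K)$-conjugacy (as dictated by the isomorphism criterion for crossed $G$-spaces), and verify that the passage from the sum over $N_G(K)/K$ to the class sum $s_a$ produces exactly the positive integer multiplicity $|C_{N_G(K)}(a):K|$; it is precisely this multiplicity, together with torsionfreeness of $R$, that prevents cancellation. For the final assertion, I would note that injectivity is not actually required to produce a map on spectra: functoriality of $\spec$ applied to the ring homomorphism $\zeta$ already yields $\spec\big(\prod_{U\in\CO_G}Z(RC_G(U))\big)\to\spec\big(\wh{B^c_R}(G)\big)$, $\mathfrak p\mapsto\zeta^{-1}(\mathfrak p)$, and the injectivity of $\zeta$ guarantees that this morphism is dominant.
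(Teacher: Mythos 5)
Your proof is correct and follows essentially the same route as the paper, which simply invokes the argument of Bouc's Lemma 2.3.2 with the representative $K$ chosen of minimal index $|G:K|$ among the open subgroups appearing with nonzero coefficient --- exactly the triangularity-plus-orbit-sum argument you spell out. Your closing observation that the map on spectra already follows from functoriality of $\spec$ applied to the ring homomorphism $\zeta$, with injectivity only adding dominance, is also accurate.
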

Note that the ring extension $\wh B(G)\subset\wh{B^c}(G)$ is not algebraic, and therefore the mapping in Lemma~\ref{lem:brauer-inj} need not be surjective.


\section{Idempotents of $\wh B(G)$}\label{sec:spec}

Let $G$ be a profinite group. 
We draw on the properties of the ring homomorphisms $\fix_N$ and $\Inf_{G/N}^G$ defined in Section~\ref{sec:xb} in order to investigate the relationships between the idempotents of $\wh B(G)$ with those of the Burnside rings of the finite quotient groups of $G$.

If $G$ is finite, Dress proved that $G$ is soluble if and only if the prime ideal spectrum of $B(G)$ is connected, i.e. the only idempotents of $B(G)$ are $0$ and $1$ (cf. \cite[Section 7.5, Corollary]{jacobson}). 
(By {\em ideal}, we mean an ideal that is closed in the topology of $\wh B(G)$ defined by taking $\{\ker(\fix_N)\mid N\noo G\}$ as open neighbourhood basis of $0\in\wh B(G)$.)
This result extends to profinite groups and $\wh B(G)$ in the following way.

\begin{prop}\label{prop:prosoluble}
Let $G$ be a profinite group. Then $G$ is prosoluble if and only if the prime ideal spectrum of $\wh B(G)$ is connected, i.e. the only idempotents of $\wh B(G)$ are $0$ and~$1$.
\end{prop}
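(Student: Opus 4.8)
The plan is to reduce the statement to Dress's theorem for finite groups, using the isomorphism $\wh B(G)\cong\prlim{N\noo G}B(G/N)$ established in \cite{ds} together with the standard fact \cite[Section 7.5, Corollary]{jacobson} that the prime spectrum of a commutative ring is connected if and only if its only idempotents are $0$ and $1$. Thus it suffices to prove that $G$ is prosoluble if and only if $\wh B(G)$ has no idempotents besides $0$ and $1$. At the outset I would record the group-theoretic fact that a profinite group $G$ is prosoluble precisely when every finite continuous quotient $G/N$ with $N\noo G$ is soluble (solvability being closed under subgroups, quotients and extensions).

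For the direct implication, suppose $G$ is prosoluble. Each $G/N$ is then a finite soluble group, so by Dress's theorem the only idempotents of $B(G/N)$ are $0$ and $1$. An idempotent of $\wh B(G)=\prlim{N\noo G}B(G/N)$ is a compatible family $(e_N)_{N\noo G}$ of idempotents $e_N\in B(G/N)$, hence $e_N\in\{0,1\}$ for every $N$. Since the transition maps $\fix_{N_1/N_2}\colon B(G/N_2)\to B(G/N_1)$ (for $N_2\leq N_1$, deflation along $G/N_2\twoheadrightarrow G/N_1$) are unital ring homomorphisms, they fix $0$ and $1$; and because the poset of open normal subgroups is cofiltered---any $N_1,N_2$ both receive from $N_1\cap N_2$---compatibility forces $e_{N_1}=e_{N_1\cap N_2}=e_{N_2}$ for all $N_1,N_2$. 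The family is therefore constant, equal to $0$ throughout or to $1$ throughout, so $\wh B(G)$ has only the trivial idempotents.

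For the converse I would argue contrapositively. If $G$ is not prosoluble, choose $N\noo G$ with $G/N$ insoluble; by Dress's theorem $B(G/N)$ contains an idempotent $e\notin\{0,1\}$. The key point is that the inflation map $\Inf_{G/N}^G\colon B(G/N)\to\wh B(G)$ is a \emph{unital} ring homomorphism: it sends disjoint unions to disjoint unions, cartesian products to cartesian products, and the one-point $G/N$-set to $1_{\wh B(G)}=[G/G]$. Hence $\Inf_{G/N}^G(e)$ is an idempotent of $\wh B(G)$. It is nontrivial because $\Inf_{G/N}^G$ is a section of $\fix_N$, so $\fix_N\big(\Inf_{G/N}^G(e)\big)=e\neq 0,1$; were $\Inf_{G/N}^G(e)$ equal to $0$ or $1$, applying the ring homomorphism $\fix_N$ would return $0$ or $1$, a contradiction. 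Thus $\wh B(G)$ has a nontrivial idempotent.

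The step I expect to require the most care is verifying that inflation is a genuine unital ring homomorphism in the profinite setting, since this is exactly what lets a nontrivial idempotent in a single finite quotient be transported to $\wh B(G)$ and, through the section identity $\fix_N\circ\Inf_{G/N}^G=\Id_{B(G/N)}$, be recognised as nontrivial; this is the crux of the converse. The remaining points---the cofiltered-constancy argument in the direct implication and the equivalence between prosolubility and solubility of all finite continuous quotients---are routine, but I would state them explicitly so that the reduction to Dress's finite-group theorem is clean.
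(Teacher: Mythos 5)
Your proof is correct and takes essentially the same approach as the paper's: the forward direction reduces to Dress's theorem for the finite soluble quotients using that each $\fix_N(e)$ is an idempotent of $B(G/N)$ together with the compatibility/injectivity coming from $\wh B(G)\cong\prlim{N\noo G}B(G/N)$, and the converse transports idempotents through the unital ring homomorphism $\Inf_{G/N}^G$, using the section identity $\fix_N\circ\Inf_{G/N}^G=\Id_{B(G/N)}$. The only cosmetic differences are that the paper argues the forward step via injectivity of $\fix$ rather than constancy of the compatible family, and states the converse directly rather than contrapositively.
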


\begin{proof}
We know that the result holds for finite soluble groups. Let $G$ be a prosoluble profinite group, i.e. $G/N$ is soluble for all $N\noo G$. 
Suppose that $e=e^2\in\wh B(G)$.
Since $\fix$ is a ring homomorphism, $\fix_N(e)$ is an idempotent in $B(G/N)$, and therefore $\fix_N(e)\in\{0,1\}$, for all $N\trianglelefteq_oG$.

Since $\fix$ is injective $\fix_N(e)=0$ for all $N\trianglelefteq_oG$ if and only if $e=0$ in $\wh B(G)$, i.e. $e$ is the isomorphism class of the empty set.
So, suppose that $e\neq0$. Then there must be some open normal subgroup $N$ of $G$ such that $\fix_N(e)\neq0\in B(G/N)$. 
Since $G/N$ is a finite soluble group, we must have $\fix_N(e)=1\in B(G/N)$.
That is, $\fix_N(e)=[(G/N)\big/(G/N)]\cong[G/G]\cong[(G/M)/(G/M)]$, and it follows that $\fix_M(e)=1\in B(G/M)$ for every open normal subgroup $M$ of $G$. 
We conclude that $e=1$ in $\wh B(G)$.

Conversely, suppose that $0$ and $1$ are the only idempotents of $\wh B(G)$.
Let $N\trianglelefteq_oG$. Suppose that $e_N^2=e_N\in B(G/N)$. Then $\Inf_{G/N}^G(e_N)$ is an idempotent of $\wh B(G)$. 
By assumption, this idempotent must be either $0$ or $1$. It follows that either $e_N=0$ of $e_N=1$ in $B(G/N)$ for all $N\noo G$, and so $G/N$ is a finite soluble group.
\end{proof}

The above result leads us to investigate a possible correspondence between the (primitive) idempotents of $\wh B(G)$ and those of the Burnside rings $B(G/N)$, for $N\trianglelefteq_oG$ of the finite quotients of $G$.

First, let us recall some elementary facts in group theory. By convention, a perfect group is a nonabelian (hence nontrivial) group.

\begin{remark}\label{rem:gp-facts}
\begin{enumerate}\

\item $G$ is a perfect group if and only if $G/H$ is perfect for all $H\trianglelefteq G$.
Indeed, $G$ is perfect if and only if $G$ has no nontrivial abelian quotient, if and only if no nontrivial quotient $G/H$ of $G$ has a nontrivial abelian quotient. 

In particular, if $G$ is profinite FC, \cite[Lemma 2.6]{shalev} shows that $G'$ is finite. Since the property FC is inherited by subgroups, and since $H'\leq G'$ for all $H\leq G$, any perfect subgroup of $G$ is finite. 
More generally, for an arbitrary FC-group $G$, any perfect subgroup is torsion (since $G'$ is torsion).

\item\label{it:OpG} Let $p$ be a prime, and let $G$ be a profinite group. 
Recall that for a finite group $H$, there is a unique well-defined characteristic subgroup $O^p(H)$ which is the minimal normal subgroup of $H$ with quotient a $p$-group. 
For all $N\noo G$, let $U_N$ the characteristic open subgroup of $G$ such that $N\leq U_N\leq G$ and $G/U_N\cong (G/N)\big/O^p(G/N)$.
If $N_2\leq N_1$ are open normal subgroups of $G$, then $G/U_1\cong(G/N_2)\big/(U_1/N_2)$ is a (finite) $p$-group, quotient of $G/N_2$, where $U_i=U_{N_i}$. 
Therefore, $G/U_2\onto G/U_1$, and we obtain an inverse system of finite $p$-groups $\{G/U_i,\;G/U_j\onto G/U_i\;(\forall\;N_j\leq N_i), \;N_i,N_j\noo G\}$.
Let $\ovl G=\prlim{N\noo G}G/U_N$ be the inverse limit, and $\theta_p:G\to\ovl G$ the quotient map induced by the projections $G/N\to G/U_N$. 
Note that $\theta_p$ is well defined since the squares $\xymatrix{G/N_2\ar[r]\ar[d]&G/N_1\ar[d]\\G/U_2\ar[r]&G/U_1}$, where the maps are the quotient maps, commute for all $N_1,N_2\noo G$ with $N_2\leq N_1$.
We define
$$O^p(G)=\ker(\theta_p)=\bigcap_{N\noo G}U_N.$$
Then, $O^p(G)$ is a closed characteristic subgroup of $G$ with the property that any pro-$p$ quotient group of $G$ is a quotient of $G/O^p(G)$.  

\item Let $H\leq G$ be a finite subgroup of a residually finite group $G$.
For each $x\in H$, there exists $N_x\trianglelefteq_oG$ such that $x\notin N_x$. 
Let $N_H=\dst\bigcap_{x\in H}N_x$. Then $N_H\noo G$ and $N_H\cap H=1$. Hence, there are infinitely many open normal subgroups of $G$ which do not meet $H$. 
In particular, if $G$ is profinite FC and $H$ is a perfect subgroup of $G$, then the set 
$$\mathcal N_H=\{N\trianglelefteq_oG\mid |H\cap N|=1\}$$
is a filter base for $G$, that is, $\mathcal N_H$ is a family of open normal subgroups of $G$ such that:
\begin{itemize}
\item[(i)] for all $N_1,N_2\in\mathcal N_H$, there exists $N_3\in\mathcal N_H$ with $N_3\leq N_1\cap N_2$, and
\item[(ii)] $\dst\bigcap_{N\in\mathcal N_H}N=\{1\}$.
\end{itemize}
Indeed we note that we have the stronger condition that for any $N_1\trianglelefteq_oG$ and for any $N_2\in\mathcal N_H$, then $N_1\cap N_2\in\mathcal N_H$, from which follows that $\dst\bigcap_{N\in\mathcal N_H}N=\bigcap_{N\trianglelefteq_oG}N=\{1\}$.
\end{enumerate}
\end{remark}

Remark~\ref{rem:gp-facts}~\eqref{it:OpG} would lean towards a definition of idempotents in $\wh B_{\Z_p}(G)$, if we tried to extend the result from finite groups.
However, as we shall shortly see, our methods only allow us to define idempotents indexed by open subgroups of $G$, but we cannot expect the $p$-perfect subgroups of a profinite group to be all open.
We thus leave this question aside, referring the reader to Proposition~\ref{prop:prim-idemp} as a starter towards generalising further our results.
We close this parenthesis on some remarks about groups with the following observation.

Let $G$ be a profinite group, and define the set of (closed) subgroups of $G$
$$\mathcal P=\{1<H\leq G\mid H'=H\}.$$
Write $[\mathcal P]$ for a set of representatives of the $G$-conjugacy classes of perfect subgroups of $G$.
For $n\in\N$, define inductively the (closed) derived series $G=G^{(1)}\geq G^{(2)}\geq G^{(3)}\geq\dots$ for $G$, where we define $G^{(1)}=G$ and $G^{(i)}=\ovl{[G^{(i-1)},G^{(i-1)}]}$ for all $i\geq2$. 
Since the series is monotone decreasing, if some $G^{(n)}$ is finite, then the series converges and we have a well defined subgroup $G^{(\infty)}=\dst\bigcap_{n\in\N}G^{(n)}$.

\begin{lemma}\label{lem:prosol}
Let $G$ be a profinite group.
Then $\mathcal P\neq\emptyset$ if and only if $G$ is not prosoluble, if and only if $G^{(\infty)}$ exists and is nontrivial, that is, the derived series converges to a nontrivial subgroup of $G$. 
\end{lemma}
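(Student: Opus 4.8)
The plan is to prove the cycle of implications among the three properties (A) $\mathcal P\neq\emptyset$, (B) $G$ is not prosoluble, and (C) $G^{(\infty)}=\bigcap_{n\in\N}G^{(n)}$ is nontrivial; note that $G^{(\infty)}$ is in any case a well-defined closed subgroup of $G$, so I read ``$G^{(\infty)}$ exists and is nontrivial'' simply as $G^{(\infty)}\neq1$. The whole argument rests on transporting the closed derived series of $G$ through the finite quotients $G/N$, $N\noo G$. So the first, formal, step I would carry out is to verify by induction on $i$ that the quotient map $\pi_N\colon G\to G/N$ satisfies $\pi_N(G^{(i)})=(G/N)^{(i)}$, the ordinary $i$-th derived subgroup of the finite group $G/N$; the inductive step uses $\pi_N(\ovl{[A,A]})=[\pi_N(A),\pi_N(A)]$, which holds because $G$ is compact and $G/N$ is discrete, so the closure disappears in the image. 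In particular $G/N$ is soluble if and only if $G^{(i)}\leq N$ for some $i$, which already gives the easy half of (B)$\Leftrightarrow$(C): if $G$ is prosoluble then for each $N\noo G$ some $G^{(i)}\leq N$, whence $G^{(\infty)}\leq\bigcap_{N\noo G}N=1$.

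The core step is to show that for every $N\noo G$ one has $\pi_N(G^{(\infty)})=(G/N)^{(\infty)}$, the terminal term $P_N$ of the (finite, hence stabilising) chain $(G/N)^{(i)}$, which is trivial exactly when $G/N$ is soluble and is a perfect group otherwise. The inclusion $\subseteq$ is immediate from Step 1. For $\supseteq$ I would use profinite compactness: fixing $k$ so that the chain has stabilised at $P_N=\pi_N(G^{(k)})$ and taking $y\in P_N$, the sets $\pi_N^{-1}(y)\cap G^{(i)}$ for $i\geq k$ form a descending family of nonempty compact subsets of $G$, so by the finite intersection property $\pi_N^{-1}(y)\cap G^{(\infty)}\neq\emptyset$, producing a preimage of $y$ inside $G^{(\infty)}$. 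Consequently $G^{(\infty)}=\prlim{N\noo G}P_N$, an inverse limit of finite perfect groups with surjective transition maps $P_{N_2}\onto P_{N_1}$ (a surjection of finite groups carries perfect core onto perfect core). From this I would read off two things at once: $G^{(\infty)}\neq1$ precisely when some $P_N\neq1$, i.e. precisely when some $G/N$ is nonsoluble, completing (B)$\Leftrightarrow$(C); and $G^{(\infty)}$ is itself perfect, since $\ovl{[G^{(\infty)},G^{(\infty)}]}$ surjects onto every perfect quotient $P_N$ and a closed subgroup surjecting onto all finite quotients is everything. Thus whenever $G^{(\infty)}\neq1$ it is a nontrivial perfect closed subgroup, giving (C)$\Rightarrow$(A).

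It remains to close the cycle with (A)$\Rightarrow$(B), which is purely group-theoretic: if $1<H\leq G$ is closed with $H'=H$ and $G$ were prosoluble, then $H$ would be prosoluble (closed subgroups of prosoluble groups are prosoluble), so $H$ would have a nontrivial finite soluble quotient $H/M$ with $(H/M)'\neq H/M$; but Step 1 applied to $H$ gives $\pi_M(H')=(H/M)'$, contradicting $H'=H$. Hence $G$ is not prosoluble. The main obstacle I anticipate is the core step: the only subtle point is proving the reverse inclusion $\pi_N(G^{(\infty)})\supseteq P_N$ --- that the image of the \emph{infinite} intersection is the whole perfect core rather than merely contained in it --- which genuinely needs the compactness of $G$, and then checking that the resulting inverse limit of perfect groups is again perfect. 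Everything else is bookkeeping with the derived series in finite quotients.
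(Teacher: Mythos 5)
Your proof is correct, but it follows a genuinely different route from the paper's. The paper first notes that $\mathcal P\neq\emptyset$ iff $G$ is not prosoluble, on the grounds that $H\in\mathcal P$ exactly when its images $HN/N$, for $N\noo G$ not containing $H$, are nontrivial perfect subgroups of $G/N$; then, for the implication $\mathcal P\neq\emptyset\Rightarrow G^{(\infty)}\neq 1$, it forms the closed subgroup $U=\ovl{\langle H\mid H\in\mathcal P\rangle}$ generated by \emph{all} perfect subgroups, shows $U$ is nontrivial, perfect and characteristic, shows $G/U$ has no nontrivial perfect subgroup and is hence prosoluble, and concludes $U=G^{(\infty)}$; the remaining implication is immediate, since a nontrivial $G^{(\infty)}$ lies in $\mathcal P$. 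You never form $U$: instead you push the closed derived series into the finite quotients via $\pi_N(G^{(i)})=(G/N)^{(i)}$, and your compactness step $\pi_N(G^{(\infty)})=P_N$ (the perfect core of $G/N$) identifies $G^{(\infty)}\cong\prlim{N\noo G}P_N$, from which perfection of $G^{(\infty)}$ and the equivalence with non-prosolubility drop out simultaneously. Each approach buys something. The paper's construction carries extra structural information: every member of $\mathcal P$ is contained in $G^{(\infty)}$, a maximality statement your argument does not produce. Your route gives the explicit inverse-limit description, and proves that $\bigcap_n G^{(n)}$ is perfect with no stabilisation hypothesis, which cleanly disposes of the ``exists'' caveat in the statement; indeed your reading ($G^{(\infty)}=\bigcap_n G^{(n)}\neq 1$) is the one under which the lemma is true, since under a strict ``the series stabilises'' reading it would fail, e.g.\ for $W\times A_5$ with $W$ prosoluble of infinite derived length. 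Your argument also spells out the implication ``not prosoluble $\Rightarrow\mathcal P\neq\emptyset$'', which the paper compresses into its opening sentence even though producing a closed perfect subgroup of $G$ from nonsoluble finite quotients is exactly where compactness is needed. The two standard facts you quote without proof (closed subgroups of prosoluble groups are prosoluble; a closed subgroup of a profinite group whose image in every finite continuous quotient is everything equals the whole group) are indeed standard and fine to cite.
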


\begin{proof}
First, note that $\mathcal P\neq\emptyset$ if and only if $G$ is not prosoluble, since $H\in\mathcal P$ if and only if for all $N\noo G$ such that $H\not\leq N$, then $G/N$ is a finite group with a perfect nontrivial subgroup $HN/N$. Hence, we need to show that $\mathcal P\neq\emptyset$ if and only if $G^{(\infty)}$ exists and is nontrivial.

If $G^{(\infty)}$ exists and is nontrivial, then $G^{(\infty)}\in\mathcal P$. 
Conversely, suppose that $\mathcal P\neq\emptyset$.
Let $U=\ovl{\langle H\mid H\in\mathcal P\rangle}$. 
Note that $1\neq U$ is characteristic in $G$ since $\mathcal P$ is closed under $G$-conjugation and since the image of any perfect subgroup of $G$ by an automorphism of $G$ is again a perfect subgroup of $G$. 
Moreover, $U'\geq\ovl{\langle H'\mid H\in\mathcal P\rangle}=U$ shows that $1\neq U\in\mathcal P$ is perfect.
The assertion follows from the observation that $G/U$ is soluble. 
Indeed, any perfect subgroup $H/U$ of $G/U$, with $U\leq H\leq G$, satisfies $H=H'U=H'U'=H'$, where the first equality holds because $H/U=(H/U)'=H'U/U$. 
Hence, $H\in\mathcal P$, which implies that $H=U$.
Therefore, $G/U$ is profinite and soluble, and we must have $1\neq U=G^{(\infty)}$ as required.
\end{proof}

The set $[\mathcal P]$ is useful in the description of the integral idempotents of the Burnside ring of a finite group $G$. Indeed, if $G$ is a finite group,
the primitive idempotents of the Burnside $\Q$-algebra $B_\Q(G)$ of $G$ are indexed by the conjugacy classes of subgroups $H$ of $G$, and have the form \cite{gluck}:
$$e_H=\sum_{1\leq K\leq H}\frac{\mu(K,H)}{|N_G(H):K|}G/K$$
where $\mu(-,-)$ denotes the M\"obius inversion formula, and the sum is over all the subgroups of $H$. 
In $B_\Q(G)$, we have $e_H^2=e_H$ and $e_H$ is characterised by $|(e_H)^K|=1$ if and only if $K$ is $G$-conjugate to $H$, and $|(e_H)^K|=0$ otherwise.

In the (integral) Burnside ring $B(G)$, the set
$$\{f_H=\sum_Ke_K\mid H\in[\mathcal P]\cup\{1\}\}$$
is a complete set of primitive pairwise orthogonal idempotents,
where $K$ runs through a set of representatives of the conjugacy classes of subgroups of $G$ such that $K^{(\infty)}$ is $G$-conjugate to $H$, for all $H\in[\mathcal P]\cup\{1\}$.

\vspace{.3cm}

Suppose now that $G$ is profinite.
We generalise Gluck's idempotent formula to $\wh B_{\Q}(G)$ as follows: Let $\CO_G$ denote a set of representatives of the conjugacy classes of open subgroups of $G$. For $H\in\CO_G$, put 
$$e_H=\sum_{K\leq_oH}\frac{\mu(K,H)}{|N_G(H):K|}G/K.$$
For all $N\noo G$, we have $\fix_N(e_H)=e_{H/N}$ if $N\leq H$ and $\fix_N(e_H)=0$ if $N\not\leq H$, as element in $B_\Q(G/N)$.
Indeed, 
$$\fix_N(e_N)=\sum_{N\leq K\leq H}\frac{\mu(K,H)}{|N_G(H):K|}G/K=\sum_{N\leq K\leq H}\frac{\mu(K/N,H/N)}{|N_{G/N}(H/N):K/N|}(G/N)\big/(K/N)$$
in $B_\Q(G/N)$, since, if $N\leq H$, then $N_{G/N}(H/N)=\{gN\in G/N\mid\ls{gN}H\leq HN=H\}=N_G(H)/N$.
By definition, $(e_H)^2=e_H$, since $(\fix_N(e_H))^2=\fix_N(e_H)$ for all $N\noo G$. 
Now, if $U\leq_oG$, then $|(e_H)^U|=|(e_{H/N})^{U/N}|$ for any $N\noo G$ with $N\leq H\cap U$. 
By the case of finite groups, this number is $1$ if $U/N$ is $G/N$-conjugate to $H/N$, and $0$ otherwise. It then suffices to observe that for such $N$, $U/N$ is $G/N$-conjugate to $H/N$ if and only if $U$ is $G$-conjugate to $H$. 
It follows that $|(e_H)^U|=1$ if $U$ is $G$-conjugate to $H$ and $0$ otherwise, for all $H,U\leq_oG$. 

We have shown the following.
\begin{prop}\label{prop:prim-idemp}
Assume the above notation. 
\begin{enumerate}
\item For every $H\leq_oG$, the element $e_H=\sum_{K\leq_oH}\frac{\mu(K,H)}{|N_G(H):K|}G/K\in\wh B_\Q(G)$ is an idempotent. In particular, it need not be a finite $\Q$-linear combination of transitive finite $G$-sets.
\item The ghost map 
$$\wh B_\Q(G)\longrightarrow\Q^{\CO_G},\quad x\mapsto(|x^U|)_{U\in\CO_G}$$
maps the set $\{e_H\mid H\in\CO_G\}$ to a canonical basis of the ghost $\Q$-algebra. That is, $e_H\longmapsto(\delta_{U,H})_{U\in\CO_G}$, where $\delta_{U,H}=1$ if $U$ is conjugate to $H$ and is $0$ otherwise.
\end{enumerate}
\end{prop}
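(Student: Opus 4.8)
The strategy is to transport everything from the finite quotients $G/N$ up to $\wh B_\Q(G)$ using the isomorphism $\wh B(G)\cong\prlim{N\noo G}B(G/N)$ from \cite{ds} together with the injectivity of $\fix$, relying on the identity $\fix_N(e_H)=e_{H/N}$ (and $\fix_N(e_H)=0$ when $N\not\leq H$) already established in the paragraph preceding the statement. First I would make precise in what sense $e_H$ is an element of $\wh B_\Q(G)$: because $H$ is open but may be infinite, the defining sum over $K\leq_oH$ can have infinitely many nonzero terms, so it is not a finite combination of transitive $G$-sets. For each fixed $N\noo G$, however, only the finitely many terms with $N\leq K\leq H$ survive under $\fix_N$, and they assemble into the Gluck idempotent $e_{H/N}\in B_\Q(G/N)$; applying the same computation at each level shows that these are compatible with the transition maps $\fix_{N_1/N_2}$, so the family $(\fix_N(e_H))_{N\noo G}$ defines a single element of $\prlim{N\noo G}B_\Q(G/N)$. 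This both gives meaning to the infinite sum and yields the parenthetical assertion of (1): for infinite $H$ no finite $\Q$-combination can realise the prescribed image in every quotient.

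For the idempotency in (1) I would argue level by level. Since each $\fix_N$ is a ring homomorphism and each $e_{H/N}$ is idempotent in the finite Burnside $\Q$-algebra by \cite{gluck}, we get $\fix_N(e_H^2)=\fix_N(e_H)^2=e_{H/N}^2=e_{H/N}=\fix_N(e_H)$ for every $N\noo G$ (the case $N\not\leq H$ being trivial, both sides vanishing). As $\Q$ is flat over $\Z$, the injectivity of $\fix$ on $\wh B(G)$ passes to $\wh B_\Q(G)$, and therefore $e_H^2=e_H$.

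For (2) the mark computation is essentially the one already recorded before the statement. Given $U\in\CO_G$, choose $N\noo G$ with $N\leq H\cap U$; then $|(e_H)^U|=|(e_{H/N})^{U/N}|$, since $U$-fixed points of an almost finite $G$-space are computed in the quotient $G/N$ once $N\leq U$. By the finite case this cardinality equals $1$ precisely when $U/N$ is $G/N$-conjugate to $H/N$, which, because $N\leq H\cap U$, is equivalent to $U$ being $G$-conjugate to $H$, and it is $0$ otherwise. Hence $e_H$ maps under the ghost map to the indicator vector $(\delta_{U,H})_{U\in\CO_G}$, and as $H$ ranges over $\CO_G$ these are exactly the canonical idempotent basis vectors of the ghost $\Q$-algebra $\Q^{\CO_G}$. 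The one genuinely delicate point, and the main obstacle, is the first step: one must check that the formally infinite sum defining $e_H$ really converges to a well-defined element of the rationalised ring and that its fixed-point family is compatible across all $N$, a subtlety sharpened by the fact that $\Q\otimes_\Z(-)$ need not commute with the inverse limit. Once $e_H$ is pinned down as the compatible family $(e_{H/N})_N$, injectivity of $\fix$ delivers both the idempotent relation and the mark values automatically.
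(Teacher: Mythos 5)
Your proposal is correct and follows essentially the same route as the paper: establish $\fix_N(e_H)=e_{H/N}$ (resp.\ $0$ when $N\not\leq H$) on each finite quotient, deduce idempotency from Gluck's finite case together with injectivity of $\fix$, and compute the marks $|(e_H)^U|$ by passing to a quotient $G/N$ with $N\leq H\cap U$. The only difference is that you make explicit the well-definedness of $e_H$ as the compatible family $(e_{H/N})_{N\noo G}$ in $\prlim{N\noo G}B_\Q(G/N)$ (and the subtlety that rationalisation need not commute with the inverse limit), a point the paper treats implicitly.
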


\begin{example}
Let $G=\Z_p$ for a prime $p$. Then, for all $n\geq0$, 
$$e_{p^nG}=\frac1{p^n}G/p^nG-\frac1{p^{n+1}}G/p^{n+1}G~,$$
since for nonnegative integers $m\leq n$, we have $\mu(p^mG,p^nG)=1$ if $n=m$, $\mu(p^mG,p^nG)=-1$ if $m+1=n$, and $\mu(p^mG,p^nG)=0$ otherwise.

If $G=\wh{\Z}$, then $e_G=\dst\sum_n\frac{\mu(n\wh{\Z},\wh{\Z})}n\wh{\Z}/n\wh{\Z}$, where $n$ runs through all the integers which factorise into a product of distinct primes.
\end{example}

Dress's result \cite[Proposition 2]{dress-sol} does not extend as such to obtain a complete list of the integral primitive idempotents in $\wh B(G)$.
If $H$ is a perfect subgroup of $G$, it need not contain any open subgroup, and there may be infinitely many subgroups $K$ of $G$ such that $K^{(\infty)}$ is $G$-conjugate to $H$.

In particular, if $G$ is an infinite profinite FC-group, then any perfect subgroup $H$ of $G$ is finite, and therefore the set $\mathcal P$ is a subset of the finite subgroups of $G$, each of which possessing finitely many conjugates. 
Since $Z(G)$ has finite index in $G$, there are infinitely many subgroups $K\leq G$ such that $K^{(\infty)}$ is $G$-conjugate to $H$. 
But the elements $e_H$ introduced above are only defined for subgroups of finite index. Our methods lead, for instance to idempotents inflated from $B(G/Z(G))$.
We conclude this section with an example.

\begin{example}
Let $G=A_5\times\wh{\Z}$. Then $G$ is profinite FC and $H=A_5\times\{1\}$ is the unique nontrivial perfect subgroup of $G$. 
Note that for any $H\leq U\leq G$ we have $U'=U^{(\infty)}=H$.
Let
$$e_H=\Inf_{G/Z(G)}^G\big(A_5/A_5-A_5/A_4-A_5/D_{10}-A_5/D_6+A_5/C_3+2A_5/C_2-A_5/1\big),$$
where we write $A_5=G/Z(G)\cong H$, and we use the obvious identifications of the subgroups of $H$.
We might expect $(e_H)^2=e_H\in\wh B(G)$ to be a summand of $e_G$.
\end{example}


\section{Action of almost finite crossed $G$-spaces on Mackey functors for profinite groups}\label{sec:xb-mackey}

Let $G$ be a finite group and let $R$ be a commutative ring. Each crossed $G$-set acts on the category $\Mack_R(G)$ of Mackey functors for $G$ over $R$, producing a natural transformation of the identity morphism in $\Mack_R(G)$.
This property has been used in \cite{b-mackey} to obtain a ring homomorphism from the crossed Burnside ring of $G$ to the centre of the Mackey $R$-algebra for $G$ over $R$. 
Mackey functors have been extended from finite to profinite groups, taking some different perspectives depending on the objective(s) of the authors (\cite{bb,nakaoka} and \cite[Section 5]{ds}). 
In the present section, we show that the almost finite crossed $G$-spaces act on a category of Mackey functors. We follow in parallel \cite{bb} and \cite{ds}, specialising their perspective to our context.

Throughout, let $G$ be a profinite group and let $R$ be a commutative ring. 
We build on Section~\ref{sec:profinite}.

\begin{defn}\label{def:xafgf}
Let $\AF_G^r$ be the subcategory of $\AF_G$ with the same objects as $\AF_G$, and morphisms $f:X\to Y$ are the almost finite morphisms such that the fibres $f^{-1}(y)$ are finite, $\forall\;y\in Y$.
\end{defn}
The categories $\AF_G^r$ and $\AF_G$ of discrete $G$-spaces are introduced and used in \cite{bb,ds,nakaoka}. 
By contrast, in \cite{mpn}, the authors consider $G$-spaces $X$, for a discrete group $G$, such that each point stabiliser is a finite subgroup of $G$, and such that $X$ has finitely many $G$-orbits. 

Let us record some useful observations.
\begin{remark}\label{rem:g-spaces}\

\begin{enumerate}
\item If $f\in\Hom_{\AF_G}(X,Y)$, then $f^{-1}(y)$ is an almost finite $G_y$-space for all $y\in Y$, and $G_x\leq G_y$ for all $x\in f^{-1}(y)$.
\item If $G$ is finite, then $\AF_G=\AF_G^r$.
\item If $G$ is infinite, then $\AF_G^r$ has no terminal object, since $\Hom_{\AF_G^r}(X,G/G)\neq\emptyset$ if and only if $X$ is finite. 
\end{enumerate}
\end{remark}

We introduce two kinds of Mackey functors for a given profinite group $G$ (compare with \cite[Definition 2.6]{bb} and \cite[Section 5]{ds}).
\begin{defn}\label{def:mackey-cat}
A {\em Mackey functor for $G$} is an additive functor $M=(M_*,M^*):\AF_G\times\AF_G\to\Ab$
with $M_*$ covariant and $M^*$ contravariant, subject to the following axioms.
\begin{itemize}
\item[{\bf(MF1)}] $M_*(X)=M^*(X)$ for every almost finite $G$-space $X$. Thus we write simply $M(X)$. 
\item[{\bf(MF2)}] If $\dst\bigsqcup_iX_i$ is almost finite, then the natural inclusions $X_i\to\dst\bigsqcup_iX_i$ induce an isomorphism $\dst M(\bigsqcup_iX_i)\cong\prod_iM(X_i)$ of abelian groups.
\item[{\bf(MF3)}]
For any pull back diagram of almost finite $G$-spaces
$$\xymatrix{X\ar[r]^\alpha\ar[d]_\beta&Y\ar[d]^\gamma\\  
Z\ar[r]_\delta&W}\;\hbox{in $\AF_G$, the diagram}\;
\xymatrix{M(X)\ar[d]_{M_*(\beta)}&M(Y)\ar[d]^{M_*(\gamma)}\ar[l]_{M^*(\alpha)}\\  
M(Z)&M(W)\ar[l]^{M^*(\delta)}}\;\hbox{commutes in $\Ab$.}$$
\end{itemize}

A {\em restricted Mackey functor for $G$} is an additive functor $M=(M_*,M^*):\AF_G^r\times\AF_G\to\Ab$
with $M_*$ covariant and $M^*$ contravariant, subject to the same axioms {\bf(MF1, MF2)}, and with a {\em restricted} variant of {\bf(MF3)}, where the vertical maps in the left hand side pull back diagram are morphisms in $\AF_G^r$.

We let $\Mack(G)$ (resp. $\Mack^r(G)$) denote the category of (restricted) Mackey functors for $G$, whose objects are the (restricted) Mackey functors for $G$, and the morphisms are the natural transformations of functors.
Given a commutative ring $R$, we set $\Mack_R(G)=R\otimes_\Z\Mack(G)$ and call it the category of Mackey functors for $G$ over $R$.
\end{defn}

\begin{remark}\label{rem:pullback}
Recall that a pull back of $G$-spaces encodes the Mackey formula: If
$$\xymatrix{X\ar[r]^\alpha\ar[d]_\beta&G/H\ar[d]^\gamma\\ G/K\ar[r]_\delta&G/L},$$
is a pull back with $H,K,L$ closed subgroups of $G$, and, to simplify, assume that the maps $\gamma$ and $\delta$ are induced by inclusions of subgroups $H,K\hookrightarrow L$, then $X$ is of the form
$$X\cong\bigsqcup_{u\in[H\backslash L/K]}G/(H\cap\ls uK).$$
In particular, the double coset space $[H\backslash L/K]$ is a discrete space if and only if at least one of $H$ or $K$ is an open subgroup of $L$. 
\end{remark}

In \cite[Section 5]{ds}, the authors show that the {\em Burnside functor} $\wh B:=\wh{B^G}~:~\AF_G\to\Ab$, where 
$$\wh B(X)=\Hom_{\AF_G}(-,X)=:\{f:Y\to X\;\hbox{in}\;\AF_G\}$$ 
satisfies the axioms of Mackey functors, without any finiteness assumption on the fibres of the maps. In their setting, the covariant part $\wh B_*$ is given by the composition of maps:
If $\phi\in\Hom_{\AF_G}(X,Z)$, then $\wh B_*(\phi):\wh B(X)\to\wh B(Z)$ is given by $\wh B_*(\phi)\big(f:Y\to X\big)=\big(\phi f:Y\to Z)$. 
The contravariant part $\wh B^*(\phi):\wh B(Z)\to\wh B(X)$ is given by the pull back: For $f\in\Hom_{\AF_G}(Y,Z)$,
$$\xymatrix{U\ar[r]\ar[d]_{\wh B^*(\phi)(f)}&Y\ar[d]^f\\X\ar[r]^\phi&Z}$$
By contrast, if $V$ is an $RG$-module, for some commutative ring $R$, the fixed point module functor $\FP_V$ and the fixed quotient module functor $\FQ_V$ are not Mackey functors for $G$ over $R$, but restricted Mackey functors. 
Recall that they are defined on an almost finite $G$-space $X$ by
$$\FP_V(X)=\prod_{x\in[G\backslash X]}V^{G_x}\qbox{and}
\FQ_V(X)=\prod_{x\in[G\backslash X]}V_{G_x},$$
where $V_H=V/\langle hv-v\mid h\in H,\;v\in V\rangle$ denotes the $H$-coinvariants of $V$. If $f\in\Hom_{\AF_G^r}(X,Y)$, then the image of
$$(\FP_V)_*(f): \prod_{x\in[G\backslash X]}V^{G_x}\longrightarrow\prod_{y\in[G\backslash Y]}V^{G_y}$$
in the $V^{G_y}$ coordinate consists of elements of the form $\dst\sum_{x\in f^{-1}(y)}\sum_{g\in[G_y/G_x]}gv_x$ for elements $v_x\in V^{G_x}$, for all $x\in f^{-1}(y)$. 
This is well defined if and only if $f^{-1}(y)$ is a finite set. 
The contravariant Mackey functor $(\FP_V)^*$ is induced by the inclusions of fixed points $V^{G_y}\hookrightarrow V^{G_x}$ for all $x\in f^{-1}(y)$.
Similarly, the image of
$$(\FQ_V)_*(f):\prod_{x\in[G\backslash X]}V_{G_x}\longrightarrow\prod_{y\in[G\backslash Y]}V_{G_y}$$
in the $V_{G_y}$ coordinate consists of elements of the form $\dst\sum_{x\in f^{-1}(y)}\sum_{g\in[G_y/G_x]}\ovl{v_x}$, where $\ovl{v_x}\in V_{G_y}$ is the image of $v_x\in V_{G_x}$ via the quotient map 
$\xymatrix{V_{G_x}\ar@{->>}[r]&V_{G_y}}$ for all $x\in f^{-1}(y)$. Again, this is well defined if and only if $f^{-1}(y)$ is a finite set. 
The contravariant Mackey functor $(\FQ_V)^*$ is induced by the inclusions of $R$-modules $V_{G_y}\hookrightarrow V_{G_x}$ for all $x\in f^{-1}(y)$.

\vspace{.3cm}
A key observation in \cite{tw}, extended in \cite{b-mackey} (referring to the original work of Yoshida), is that, if $G$ is a finite group, then the crossed $G$-sets act on the category of Mackey functors. 
We now generalise this action to profinite FC-groups and almost finite crossed $G$-spaces. 

Let $(f:X\to G^c)\in\xAF_G$ and let $Y\in\AF_G$. Define the mappings:
\begin{align*}
\pi_Y,\tau^f_Y:~X\times Y&\longrightarrow Y,\\
\tau^f_Y(x,y)&=f(x)y\qbox{and}\\
\pi_Y(x,y)&=y,\qbox{for all $(x,y)\in X\times Y$,}
\end{align*}
where we have abbreviated the notation for convenience ($\pi^{X\times Y}_Y$ and $\tau^{(f:X\to G^c)}_Y$ would be more precise than $\pi_Y$ and $\tau^f_Y$, respectively).
Clearly, both are continuous and $\pi_Y$ is $G$-equivariant ($G$ acts on $X\times Y$ diagonally).
The map $\tau^f_Y$ is $G$-equivariant too, since for all $g\in G$ and all $(x,y)\in X\times Y$, we have
$$\tau^f_Y\big(g\cdot(x,y)\big)=\tau^f_Y(gx,gy)=
f(gx)gy=\ls g{f}(x)gy=gf(x)y=g\tau^f_Y(x,y).$$
The fibres $\pi_Y^{-1}(y)$ and $(\tau^f_Y)^{-1}(y)$ are subsets of the almost finite $G$-space $X\times Y$, and therefore they are almost finite $G_y$-spaces for all $y\in Y$. 
Thus $\pi_Y$ and $\tau^f_Y$ are morphisms in $\AF_G$. 
Note that, if $f(x)=1$ for all $x\in X$, then $\tau^f_Y=\pi_Y$.

\vspace{.3cm}
Now, let $M$ be a Mackey functor for $G$ over $R$. Consider the composition
$$\eta^f_Y=M_*(\tau^f_Y)M^*(\pi_Y)~:~M(Y)\longrightarrow M(Y).$$ 
By definition of Mackey functors, this composition is an $R$-module homomorphism. 
Given $\alpha\in\Hom_{\AF_G}(Y,Y')$, the diagrams of $R$-modules and homomorphisms
$$\xymatrix{M(Y)\ar[r]^{\eta^f_Y}\ar[d]_{M_*(\alpha)}&M(Y)\ar[d]^{M_*(\alpha)}\\
M(Y')\ar[r]^{\eta^f_{Y'}}&M(Y')}\qbox{and}\xymatrix{M(Y)\ar[r]^{\eta^f_Y}&M(Y)\\
M(Y')\ar[r]^{\eta^f_{Y'}}\ar[u]^{M^*(\alpha)}&M(Y')\ar[u]_{M^*(\alpha)}}\qbox{commute.}$$

It follows that \cite[Proposition 4.3]{b-mackey} holds in the present context.

\begin{prop}\label{prop:xb-action}
Let $(f:X\to G^c)\in\xAF_G$.
The map $\eta^f$ is a natural transformation of the identity functor of the category $\Mack_R(G)$. Moreover, if $(f':X'\to G^c)\in\xAF_G$, then
\begin{align*}
\eta^f+\eta^{f'}&=\eta^{f\sqcup f'},\qbox{and}\\
\eta^f\cdot\eta^{f'}&=\eta^{f\times f'},
\end{align*}
where, for any almost finite $G$-space $Y$, there are $R$-module endomorphisms of $M(Y)$,
$$(\eta^f+\eta^{f'})_Y=\eta^f_Y\oplus\eta^{f'}_Y:\xymatrix{M(Y)\ar[r]&M(X)\oplus M(X')\ar[r]&M(Y)}$$
and 
$$(\eta^f\cdot\eta^{f'})_Y=\eta^{f\times f'}_Y:\xymatrix{M(Y)\ar[r]&M(X)\otimes_RM(X')\ar[r]&M(Y)}.$$
\end{prop}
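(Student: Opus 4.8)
The plan is to establish Proposition~\ref{prop:xb-action} in three stages: first verify that each $\eta^f_Y$ assembles into a natural transformation of the identity functor; then check additivity; then multiplicativity. The naturality claim is essentially already prepared by the two commuting squares displayed immediately before the statement, so I would begin by remarking that these squares say precisely that $\eta^f=(\eta^f_Y)_{Y\in\Ob(\AF_G)}$ commutes with both $M_*(\alpha)$ and $M^*(\alpha)$ for every morphism $\alpha\in\Hom_{\AF_G}(Y,Y')$; since a morphism of Mackey functors is by definition a family compatible with both the covariant and contravariant structure, this is exactly the statement that $\eta^f$ is a natural transformation of $\Id_{\Mack_R(G)}$. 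The only point to spell out is that naturality must be checked in the variable $M$ as well: for a morphism $\theta:M\to M'$ of Mackey functors, one needs $\theta_Y\circ\eta^{f,M}_Y=\eta^{f,M'}_Y\circ\theta_Y$, which follows because $\theta$ commutes with $M_*(\tau^f_Y)$ and $M^*(\pi_Y)$ by definition of a morphism of Mackey functors.

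For additivity, I would use axiom \textbf{(MF2)}. Writing $X\sqcup X'$ for the domain of $f\sqcup f'$, the projection $\pi_Y$ and twist $\tau^{f\sqcup f'}_Y$ decompose along the decomposition $(X\sqcup X')\times Y=(X\times Y)\sqcup(X'\times Y)$. Applying \textbf{(MF2)} to identify $M\big((X\sqcup X')\times Y\big)\cong M(X\times Y)\oplus M(X'\times Y)$, the pullback map $M^*(\pi_Y)$ is the diagonal into the two summands and $M_*(\tau^{f\sqcup f'}_Y)$ is the sum of the two pushforwards, so that $\eta^{f\sqcup f'}_Y=\eta^f_Y+\eta^{f'}_Y$. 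This is a direct diagram-chase once \textbf{(MF2)} is invoked, and it matches the description of $(\eta^f+\eta^{f'})_Y$ given in the statement.

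The multiplicativity $\eta^f\cdot\eta^{f'}=\eta^{f\times f'}$ is the substantive step, and I expect it to be the main obstacle. The product $f\times f'$ has domain $X\times X'$ with twist $\tau^{f\times f'}_Y(x,x',y)=f(x)f'(x')y$, and the claim is that applying $\eta^f$ and then $\eta^{f'}$ recovers this. The key is to build a suitable commutative diagram of almost finite $G$-spaces whose two triangular routes correspond to the successive application of $\eta^{f'}$ then $\eta^f$ versus the single application of $\eta^{f\times f'}$, and then apply \textbf{(MF3)} to the relevant pullback squares. Concretely, one considers the composite $X\times X'\times Y\to X'\times Y\to Y$ built from the projections and twists, verifies that the relevant square is a genuine pullback in $\AF_G$ (so that the push-pull order can be interchanged via the Mackey axiom \textbf{(MF3)}), and traces through the identification $\tau^f_{X'\times Y}$ and $\pi_{X'\times Y}$ in terms of the actions. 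The care needed here is to confirm that the diagram one writes is an honest pullback in the category of almost finite $G$-spaces, which requires that all the maps involved are morphisms in $\AF_G$; since $G$ is assumed profinite FC, $G^c$ is a discrete $G$-space and all the spaces $X\times X'\times Y$ appearing are almost finite, so the earlier analysis of fibres (Remark~\ref{rem:g-spaces}(1)) guarantees the requisite finiteness. Finally, the tensor-product description of $(\eta^f\cdot\eta^{f'})_Y$ factoring through $M(X)\otimes_R M(X')$ follows from the Green-functor multiplicative structure on $M$ together with \textbf{(MF2)} applied to the product decomposition; I would invoke \cite[Proposition 4.3]{b-mackey} to confirm that the finite-group computation carries over verbatim once the pullback has been identified, since no new phenomenon arises beyond replacing finite $G$-sets by almost finite $G$-spaces.
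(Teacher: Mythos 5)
Your proposal is correct and takes essentially the same route as the paper: naturality is read off from the two commuting squares displayed before the statement, additivity follows from \textbf{(MF2)}, and multiplicativity is obtained by applying \textbf{(MF3)} to the pullback square on $X\times X'\times Y$, $X\times Y$, $X'\times Y$, $Y$ formed by the projections and twist maps, which is exactly the paper's diagram (with $\pi_{X\times Z}$, $\tau^f_{X'\times Z}$, $\tau^f_Z$, $\pi_Z$). The only detail worth making explicit when you write it up is the computation $\tau^{f'}_Y\circ\tau^f_{X'\times Y}=\tau^{f\times f'}_Y$, which is where the crossed ($G$-equivariance) condition $f'(f(x)x')f(x)=f(x)f'(x')$ enters.
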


Explicitly, Proposition~\ref{prop:xb-action} states that, for a profinite FC-group $G$, the abelian monoid of almost finite crossed $G$-spaces acts on the category of Mackey functors. 
If $G$ is an arbitrary profinite group, the action extended from \cite[Section 9]{tw} remains well defined too, where
$$(X\cdot M)(Y)=\big(M_*(\pi_Y)M^*(\pi_Y)\big)(M(Y)),$$
for all almost finite $G$-spaces $X$ and $Y$, and for all Mackey functors $M$ for $G$.

The proof is routine. For instance, for the equality $\eta^f\cdot\eta^{f'}=\eta^{f\times f'}$, let $f:X\to G^c$, let $f':X'\to G^c$, let $M$ be a Mackey functor for $G$ over $R$, and let $Z\in\AF_G$. Then,
$$\xymatrix{M(Z)\ar[rr]^{M^*(\pi_Z)}\ar[d]_{M^*(\pi_Z)}&&M(X\times Z)\ar[rr]^{M_*(\tau^f_Z)}\ar@{.>}[dll]_{M^*(\pi_{X\times Z})}&&M(Z)\ar[d]^{M^*(\pi_Z)}\\
M(X\times X'\times Z)\ar[drrrr]_{M_*(\tau^{f\times f'}_Z)}\ar@{.>}[rrrr]^{M_*(\tau^f_{X'\times Z})}&&&&M(X'\times Z)\ar[d]^{M_*(\tau^{f'}_Z)}\\
&&&&M(Z)}$$
the dotted maps make the diagram commute, and they are obtained applying $M$ to the pull back in $\AF_G$,
$$\xymatrix{X\times X'\times Z\ar[rr]^{\pi_{X\times Z}}\ar[d]_{\tau^f_{X'\times Z}}&&X\times Z\ar[d]^{\tau^f_Z}\\
X'\times Z\ar[rr]_{\pi_Z}&&Z}.$$

\vspace{.3cm}
If instead we consider restricted Mackey functors for a profinite FC-group $G$, as in \cite{bb}, then $\xAF_G$ does not act on $\Mack^r(G)$. 
Indeed, if $(f:X\to G^c)\in\xAF_G$ and $Y\in\AF_G$, then $\pi_Y:X\times Y\to Y$ is a morphism in $\xAF_G^f$ if and only if $X$ is finite. 
Instead, $\tau^f_Y$ is a morphism in $\xAF_G^f$ if and only if, for all $y\in Y$, the set $\{(x,z)\in X\times Y\mid y=f(x)z\}$ is finite. 
Thus, only the finite crossed $G$-sets act on $\Mack^r(G)$.
This observation does not come as a surprise to us, but it raises the question of the structure and purpose of the category (or categories) of Mackey functors for a profinite groups.

\vspace{.5cm}
\noindent{\bf Acknowledgements.}\; We are sincerely grateful to Serge Bouc, Ilaria Castellano, Brita Nucinkis and Jacques Th\'evenaz for several helpful discussions on the various aspects of this project.



\begin{thebibliography}{WWWW}
\bibitem{bb} W. Bley and R. Boltje, {\em Cohomological Mackey functors in number theory}, J. Number Theory {\bf105} (2004), 1--37.
\bibitem{b-mackey} S. Bouc, {\em The $p$-blocks of the Mackey algebra}, Algebr. Represent. Theory 6 (2003), no. 5, 515--543.
\bibitem{cr} C. Curtis and I. Reiner, {\em Methods of representation theory: with applications to finite groups and orders}, Volume 1, Wiley, 1981
\bibitem{td} T. tom Dieck, {\em Transformation groups}, de Gruyter Studies in Mathematics 8, Walter de Gruyter, 1987. 
\bibitem{dress-sol} A. Dress, {\em A characterization of solvable groups}, Math Z. {\bf 110} (1969), 213--217.
\bibitem{ds} A. Dress and C. Siebeneicher, {\em The Burnside ring of profinite groups and the Witt vector construction}, Adv. in Math. {\bf70} (1988), no. 1, 87--132.
\bibitem{gluck} D. Gluck, {\em Idempotent formula for the Burnside algebra with applications to the $p$-subgroup simplicial complex}, Ill. J. Math. 25 (1981), 63--67.
\bibitem{hallp} P. Hall, {\em Periodic FC-groups}, J. LMS {\bf34} (1959), 289--304.
\bibitem{jacobson} N. Jacobson, {\em Basic Algebra II}, Second Edition, Dover Publications, 2009.
\bibitem{mpn} C. Martinez-P\'erez, B. Nucinkis, {\em Cohomological dimension of Mackey functors for infinite groups}, J. Lond. Math. Soc., II. Ser. 74, No. 2 (2006), 379--396
\bibitem{may} J. P. May, {\em A concise course in algebraic topology}, Chicago lectures in mathematics series, the University of Chicago Press, 1999
\bibitem{nakaoka} H. Nakaoka, {\em Tambara functors on profinite groups and generalised Burnside functors}, Comm. Alg. 37 (2009), 3095 -- 3151. 
\bibitem{OY1} F. Oda and T.  Yoshida, {\em Crossed Burnside Rings I.}, J.  Algebra {\bf236} (2001), 29--79.
\bibitem{RZ} L. Ribes and P. Zalesskii, {\em Profinite groups}, Springer, 2000.
\bibitem{Rob} D. Robinson, {\em A course in the theory of groups}, Second Edition, Springer, 1996.   
\bibitem{shalev} A. Shalev, {\em Profinite groups with restricted centralizers}, Proc. AMS {\bf122} number 4 (1994), 1279--1284.
\bibitem{tw} J. Th\'evenaz and P. Webb, {\em The structure of Mackey functors}, Trans. Amer. Math. Soc. 347, number 6 (1995), 1865--1961.
\bibitem{tomk} M. J. Tomkinson, {\em FC-groups}, Research Notes in Mathematics 96, Pitman Advanced Publishing Program, 1984.
\bibitem{wilson} J. Wilson, {\em Profinite groups},  London Math. Soc. Monographs New Series {\bf19}, Oxford University Press, 1998. 
\end{thebibliography}
\end{document}